\setlist[enumerate]{label={\rm(\alph*)}}
\DeclareFontFamily{OT1}{pzc}{}
\DeclareFontShape{OT1}{pzc}{m}{it}%
             {<-> s * [1,150] pzcmi7t}{}
\DeclareMathAlphabet{\mathpzc}{OT1}{pzc}%
                                 {m}{it}
\theoremstyle{plain}
\newtheorem{thm}{Theorem}[section]
\newtheorem{prop}[thm]{Proposition}
\newtheorem{lemma}[thm]{Lemma}
\newtheorem{coro}[thm]{Corollary}
\theoremstyle{definition}
\newtheorem{defi}[thm]{Definition}
\newtheorem{paragr}[thm]{}
\theoremstyle{remark}
\newtheorem{rem}[thm]{Remark}
\newtheorem{ex}[thm]{Example}
\let\ndef\emph
\let\hookto\hookrightarrow
\let\nbd\nobreakdash
\newcommand\OmegaP{\Omega_p}
\newcommand\OmegaPCl{\overline{\Omega_p}}
\newcommand\OmegaCl{\overline{\Omega}}
\DeclareMathOperator\cl{cl}
\newcommand\oo{$\infty$\nbd}
\newcommand\id[1]{1_{#1}}
\newcommand\eps\varepsilon
\DeclareMathOperator{\Hom}{\mathsf{Hom}}
\newcommand\W{\mathsf{W}}
\newcommand\Cof{\mathsf{Cof}}
\newcommand\Fib{\mathsf{Fib}}
\newcommand\Cat{{\mathpzc{Cat}}}
\newcommand\Oper{{\mathpzc{Oper}}}
\newcommand\pref[1]{\widehat{#1}}
\newcommand\sSet{\pref{\Delta}}
\newcommand\dSet{\pref{\Omega}}
\newcommand\M{\mathcal{M}}
\newcommand\Sh[2]{\mathrm{Sh}_{#1, #2}}
\newcommand\zbox[1]{\makebox[0pt][l]{#1}}
\newcommand\pbox[1]{\zbox{#1}}
\newcommand\annot[1]{\zbox{$\,\scriptstyle #1$}}
\newcommand\limind\varinjlim
\newcommand\operadic{\mathrm{oper}}
\newcommand\SC[1]{\mathrm{Sc}({#1})}
\newcommand\tensBV{\otimes_{\mathrm{BV}}}
\title{The dendroidal category is a test category}
\author[D. Ara]{Dimitri Ara}
\address{Aix Marseille Univ, CNRS, Centrale Marseille, I2M, Marseille,
France}
\email{dimitri.ara@univ-amu.fr}
\author[D.-C. Cisinski]{Denis-Charles Cisinski}
\address{Fakultät für Mathematik, Universität Regensburg, 93040 Regensburg,
Deutschland}
\email{denis-charles.cisinski@mathematik.uni-regensburg.de}
\author[I. Moerdijk]{Ieke Moerdijk}
\address{Department of Mathematics, Utrecht University, PO BOX 80.010, 3508 TA
Utrecht, The Netherlands}
\email{i.moerdijk@uu.nl}
\begin{document}

\begin{abstract}
We prove that the category of trees $\Omega$ is a test category in the sense
of Grothendieck. This implies that the category of dendroidal sets is
endowed with the structure of a model category Quillen-equivalent to spaces.
We show that this model category structure, up to a change of cofibrations,
can be obtained as an explicit left Bousfield localisation of the operadic
model category structure.
\end{abstract}

\maketitle

\section*{Introduction}

The notion of a test category was introduced by Grothendieck in his
influential manuscript \cite{GrothPS}, with the aim of axiomatising those small
categories which could play a role similar to the category $\Delta$ of
simplices and serve as building blocks to describe all homotopy types of
spaces. The theory of test categories has been described and further
developed in \cite{Maltsi}, \cite{Cisinski},
\cite{JardineTest}, \cite{MaltsiCube}, \cite{CisMaltsiTheta}
and \cite{AraThtld}. Here one can find a list of examples of test
categories, which includes, in addition to $\Delta$, familiar categories
such as the category of cubes parametrising cubical sets, and Joyal's
category $\Theta_n$ parametrising a notion of $n$-dimensional category. The
goal of this paper is two-fold.  First of all, we wish to add some further
examples to this list, by showing that the category $\Omega$ of trees which
parametrises dendroidal sets is a test category. The argument will also show
that variations of $\Omega$ such as similar categories of planar trees and
closed trees are test categories.  It follows that there is a Quillen model
structure on the category of dendroidal sets which models the homotopy
category of spaces.  The second goal of this paper is to explain the
relation of this model structure to the model structure that the category of
dendroidal sets was originally designed for, namely the so-called operadic
model structure which models the homotopy theory of topological (or
simplicial) coloured operads.  Indeed, we will show that up to a small
change in the class of cofibrations only, the first model structure coming
from the fact that $\Omega$ is a test category can be obtained as a left
Bousfield localisation of the second, operadic model structure.

The plan of our paper is as follows. In the first section, we will review
the basic definitions of the theory of test categories. In Section 2, we
will present a proof of the fact that the simplex category is a test
category which is somewhat different from the ones occurring in the
literature, and is based on the fact that the product of two simplices can
be written as a union of other simplices indexed by shuffles.  The proof
that $\Omega$ is a test category will be broken up into two parts. The first
part shows the fact that the classifying space of $\Omega$ is contractible.
This fact has been known for quite some time, but a proof has never been
published. The second part of the proof will again use shuffles and follows
the same pattern as the argument for simplices. In the final section, we
discuss the relation to the operadic model structure on dendroidal sets
mentioned above.

The results of this paper go back quite a while, and were presented in 2013
at the conference celebrating the 65th birthday of G.~Maltsiniotis.  We
would like to thank G.~Maltsiniotis for encouraging us to write up the
results, and apologise for the fact that it has taken us a while.

We are grateful to the referee for carefully reading the paper.

\section{Preliminaries on test categories}

In this section, we review some basics of the theory of test categories
introduced by Grothendieck in \cite{GrothPS}. For more detailed expositions
and proofs, we refer the reader to~\cite{Maltsi} and \cite{Cisinski}.

\medbreak

We begin with a bit of notation and terminology.

\begin{paragr}
We denote by $N : \Cat \to \pref{\Delta}$ the nerve functor from small
categories to simplicial sets. A functor $u : A \to B$ between small
categories is said to be a \ndef{weak equivalence} if its nerve $N(u)$ is a
weak homotopy equivalence of simplicial sets. We say that a small category
$A$ is \ndef{aspherical} (or \ndef{weakly contractible}) if the unique
functor from $A$ to the terminal category is a weak equivalence.
\end{paragr}

\begin{paragr}
Let $u : A \to B$ be a functor. If $b$ is an object
of $B$, we denote by $A/b$ the category $A \times_B (B/b)$, where $B/b$ is
the category of objects over $b$. This category is sometimes denoted by $u
\downarrow b$. If $F$ is a presheaf on $A$ and $u$ is the Yoneda embedding,
the category $A/F$ is the category of elements of $F$.
\end{paragr}

We now introduce the basic definitions of the theory of test categories.

\begin{paragr}\label{paragr:def_weq_psh}
Let $A$ be a small category. We have a pair of adjoint
functors
\[
\begin{matrix}
 i^{}_A : & \pref{A}  & \to & \Cat, & \qquad &
i^*_A : & \Cat &  \to & \pref{A}\hfill\\
 & F &  \mapsto &  A/F & \qquad &
 & C & \mapsto & \big(a \mapsto \Hom_\Cat(A/a, C)\big)
\end{matrix}
\]
between presheaves on $A$ and small categories. A morphism $f : X \to Y$ of
presheaves on $A$ is said to be a \ndef{weak equivalence} if $i^{}_A(f) :
A/X \to A/Y$ is a weak equivalence of categories. A presheaf $X$ on $A$ is
\ndef{aspherical} if the category~$A/X$ is aspherical.
\end{paragr}

\begin{defi}
Let $A$ be a small category.
\begin{enumerate}
  \item The category $A$ is said to be a \ndef{weak test category} if, for
  every small category~$C$, the counit functor $\eps^{}_C : i^{}_A i^\ast_A
  C \to C$ is a weak equivalence.
  \item The category $A$ is said to be a \ndef{local test category} if, for
    every object $a$ of~$A$, the slice category $A/a$ is a weak test category.
  \item The category $A$ is said to be a \ndef{test category} if it is both
    a weak test category and a local test category.
\end{enumerate}
\end{defi}

The following proposition shows that to understand test categories, it is
enough to understand local test categories:

\begin{prop}[Grothendieck]\label{prop:test_cat_asph}
A small category $A$ is a test category if and only if the following two
conditions hold:
\begin{enumerate}
  \item $A$ is aspherical;
  \item $A$ is a local test category.
\end{enumerate}
\end{prop}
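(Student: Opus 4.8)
The plan is to prove the two implications separately; the forward direction is essentially formal, and the content lies in the converse. \textbf{Test implies \textup{(a)} and \textup{(b)}.} If $A$ is a test category it is a local test category by definition, so (b) holds, and it remains to check (a). I would apply the weak test property to the terminal category $e$. Since $\Hom_\Cat(A/a,e)$ is a singleton for every object $a$, the presheaf $i^\ast_A e$ is the terminal presheaf on $A$, whose category of elements $i^{}_A i^\ast_A e = A/i^\ast_A e$ is canonically $A$ itself; under this identification the counit $\eps^{}_e\colon i^{}_A i^\ast_A e\to e$ is simply the projection of $A$ onto the terminal category. As $A$ is a weak test category this projection is a weak equivalence, i.e.\ $A$ is aspherical.

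\textbf{\textup{(a)} and \textup{(b)} imply test.} Conversely, assume $A$ is aspherical and a local test category; we must show $A$ is a weak test category, i.e.\ that $\eps^{}_C\colon i^{}_A i^\ast_A C\to C$ is a weak equivalence for every small category $C$. The first step reduces this to a statement about aspherical presheaves by means of Quillen's Theorem A, in the form: a functor $u$ all of whose comma categories $u\downarrow n$ are aspherical is a weak equivalence. Fixing an object $c$ of $C$, I would construct a natural isomorphism of categories $\eps^{}_C\downarrow c\;\cong\;i^{}_A i^\ast_A(C/c)$. It rests on the fact that, $\id{a}$ being a terminal object of $A/a$, a functor $A/a\to C/c$ is the same datum as a functor $\phi\colon A/a\to C$ equipped with a morphism $\phi(\id{a})\to c$ of $C$ --- namely the value at $\id{a}$ of the unique natural transformation $\phi\Rightarrow\mathrm{const}_c$ with that component --- while $\phi(\id{a})$ is exactly the image of $(a,\phi)$ under $\eps^{}_C$. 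Since $C/c$ has a terminal object it is aspherical, so by Quillen's Theorem A it suffices to prove that $i^{}_A i^\ast_A D$ is aspherical for every aspherical small category $D$.

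For the second step I would show that the projection $p\colon i^{}_A i^\ast_A D = A/i^\ast_A D\to A$ is itself a functor of that type, i.e.\ that $p\downarrow a$ is aspherical for every object $a$ of $A$. There is a canonical identification $p\downarrow a = (A/i^\ast_A D)\times_A(A/a)\cong(A/a)/\bigl(i^\ast_A D|_{A/a}\bigr)$ with the category of elements of the restriction of $i^\ast_A D$ along the projection $A/a\to A$; and this restriction is nothing but $i^\ast_{A/a}D$, since $(A/a)/(b\to a)\cong A/b$ for every object $b\to a$ of $A/a$. Hence $p\downarrow a\cong i^{}_{A/a}i^\ast_{A/a}D$, and because $A/a$ is a weak test category the counit exhibits this category as weakly equivalent to $D$; as $D$ is aspherical, so is $p\downarrow a$. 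Thus $p$ is a weak equivalence, and since $A$ is aspherical by hypothesis $A/i^\ast_A D$ is aspherical as well. Taking $D=C/c$ for each object $c$ and returning to the first step shows that every $\eps^{}_C\downarrow c$ is aspherical, hence $\eps^{}_C$ is a weak equivalence; thus $A$ is a weak test category, and being also a local test category, it is a test category.

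The main obstacle --- indeed the only non-routine point --- is the first step: producing the isomorphism $\eps^{}_C\downarrow c\cong i^{}_A i^\ast_A(C/c)$ and checking that it is an isomorphism of categories, not merely a bijection on objects, so that Quillen's Theorem A genuinely applies to $\eps^{}_C$. Everything else comes down to the adjunction $i^{}_A\dashv i^\ast_A$, the identification $(A/a)/(b\to a)\cong A/b$ (hence $i^\ast_A D|_{A/a}\cong i^\ast_{A/a}D$), and the closure of weak equivalences under composition; the only genuinely non-formal ingredient, the good behaviour of weak equivalences of presheaves on each slice $A/a$, has been built into the hypothesis that $A$ is a local test category.
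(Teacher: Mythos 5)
Your proof is correct. There is essentially nothing in the paper to compare it against: for this proposition the authors give no argument of their own and simply refer to Maltsiniotis (Remark~1.5.4 of his notes on Grothendieck's homotopy theory), so you have supplied the proof they omit, and it is the standard one going back to Grothendieck. The easy direction via $C=e$ is fine. In the converse, the two points you single out as delicate do check out: the counit sends $(a,\phi)$ to $\phi(\id{a})$, and since $\id{a}$ is terminal in $A/a$ a natural transformation $\phi\Rightarrow\mathrm{const}_c$ is freely and uniquely determined by its component at $\id{a}$, which yields the isomorphism of categories $\eps^{}_C\downarrow c\cong i^{}_A i^\ast_A(C/c)$ on morphisms as well as on objects (the morphism condition $g=g'\circ\eps^{}_C(f)$ matches the compatibility of the two cones under restriction along $A/f$); and the identification $p\downarrow a\cong i^{}_{A/a}i^\ast_{A/a}D$ rests on the natural isomorphism $(A/a)/(b\to a)\cong A/b$, which is indeed compatible with restriction along morphisms of $A/a$, so that $i^\ast_A D|_{A/a}\cong i^\ast_{A/a}D$ as presheaves. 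Combined with Quillen's Theorem~A in the slice form the paper itself uses, the asphericity of $A$, and the weak test property of each $A/a$, this gives the statement.
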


\begin{proof}
See \cite[Remark 1.5.4]{Maltsi}.
\end{proof}

We now move on to a characterisation of local test categories in terms of
intervals.

\begin{paragr}
An \ndef{interval} of a presheaf category $\pref{A}$ consists of a presheaf
$I$ endowed with two global sections $\partial_0, \partial_1 : \ast \to I$.
The interval is said to be \ndef{separating} if the induced map $\partial_0
\amalg \partial_1 : \ast \amalg \ast \to I$ is a monomorphism.

For instance, the subobject classifier $L_A$ of the topos $\pref{A}$, also
called the \ndef{Lawvere object} of $\pref{A}$, is canonically endowed
with the structure of a separating interval, $\partial_0$ and $\partial_1$
corresponding respectively to the empty subobject and the maximal subobject
of $\ast$.

We say that an interval $I$ is \ndef{locally aspherical} if, for every
presheaf $X$ on~$A$, the projection map $X \times I \to X$ is a weak equivalence. A
direct application of Quillen's Theorem A shows that it is enough to require
this property when $X$ is representable.
\end{paragr}

\begin{thm}[Grothendieck]\label{thm:loc_test}
Let $A$ be a small category. The following conditions are equivalent:
\begin{enumerate}
  \item $A$ is a local test category;
  \item $L_A$ is locally aspherical;
  \item there exists a locally aspherical separating interval in $\pref{A}$.
\end{enumerate}
\end{thm}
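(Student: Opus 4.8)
The plan is to prove the equivalence by the cycle (a) $\Rightarrow$ (b) $\Rightarrow$ (c) $\Rightarrow$ (a), using the fact that the Lawvere object $L_A$ is a canonical example of a separating interval. The implication (b) $\Rightarrow$ (c) is immediate, since $L_A$ is always a separating interval, so the real content is in (a) $\Rightarrow$ (b) and (c) $\Rightarrow$ (a).

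For (c) $\Rightarrow$ (a), suppose we are given a locally aspherical separating interval $I$ with endpoints $\partial_0, \partial_1$. I would fix an object $a$ of $A$ and show $A/a$ is a weak test category, i.e. that for every small category $C$ the counit $\eps_C : i_{A/a} i^*_{A/a} C \to C$ is a weak equivalence. The standard strategy here is to build, from the interval $I$ on $\pref{A}$, an induced interval on $\pref{A/a}$ (pullback along the canonical projection $A/a \to A$, using that $a_! \dashv a^* $ for the localisation $A/a \to A$ sends representables to representables), which is again separating and locally aspherical. One then uses this interval to produce a functorial homotopy, for any presheaf, between the identity and a map factoring through $i^*$ applied to the relevant category; concatenating such homotopies (this is where separation is needed, to glue homotopies along their endpoints without collapsing them) yields that $\eps_C$ admits a homotopy inverse, hence is a weak equivalence. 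This is essentially Grothendieck's original argument and the main references \cite{Maltsi} and \cite{Cisinski} carry it out in detail.

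For (a) $\Rightarrow$ (b): if $A$ is a local test category, then every slice $A/a$ is a weak test category, and one must deduce that $L_A$ is locally aspherical, i.e. that $\ba \times L_A \to \ba$ is a weak equivalence for every representable $\ba = a$ (which by the remark in the preceding paragraph suffices, via Quillen's Theorem A). The key computation is to identify the category of elements $A/(\ba \times L_A)$: a point of $\ba \times L_A$ over an object $b$ is a pair consisting of a map $b \to a$ in $A$ together with a subobject of the representable $\widehat{b}$, and after working through the definitions this category is seen to be cofinal-equivalent to a category of the form $(A/a)/(\text{subobject classifier of } \pref{A/a})$, whose asphericity is exactly what the weak test category property of $A/a$ gives (applied to $C = \ast$, or more precisely using that $L_{A/a}$ is aspherical as a presheaf because $A/a$ is a weak test category). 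So the step reduces to recognising $L_A$ restricted over $a$ as the Lawvere object of the slice topos $\pref{A/a} \simeq \pref{A}/\ba$.

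The main obstacle is the bookkeeping in (a) $\Rightarrow$ (b) and (c) $\Rightarrow$ (a): correctly identifying the Lawvere object and intervals under the slice/localisation adjunctions, and, in (c) $\Rightarrow$ (a), setting up the homotopy-concatenation argument so that separation of the interval is genuinely used and the resulting homotopies assemble into an honest homotopy inverse to $\eps_C$. None of the individual steps is deep, but the argument is delicate enough that I would simply cite \cite[Theorem 1.5.6]{Maltsi} (equivalently the corresponding statement in \cite{Cisinski}) rather than reproduce it, since this theorem is standard in the theory of test categories and the focus of the present paper lies elsewhere.
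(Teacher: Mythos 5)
Your proposal is correct and ends up doing exactly what the paper does: the paper's entire proof is the citation ``See \cite[Theorem 1.5.6]{Maltsi}'', which is also your final recommendation. The preliminary sketch of the cycle (a) $\Rightarrow$ (b) $\Rightarrow$ (c) $\Rightarrow$ (a) is a reasonable outline of the standard argument, but since both you and the authors defer to the reference, there is nothing further to compare.
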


\begin{proof}
See \cite[Theorem 1.5.6]{Maltsi}.
\end{proof}

We moreover have from \cite{Cisinski} the following characterisation of
local test categories in terms of model categories:

\begin{thm}\label{thm:model_cat_local_test}
Let $A$ be a small category. The following conditions are equivalent:
\begin{enumerate}
  \item\label{item:tl_def} $A$ is a local test category;
  \item\label{item:tl_mcf} there exists a model category structure on $\pref{A}$ whose
    cofibrations are the monomorphisms and whose weak equivalences are the
    weak equivalences of presheaves as defined in~\ref{paragr:def_weq_psh}.
\end{enumerate}
Moreover, if these conditions are fulfilled, the model category structure of
the second condition is combinatorial and proper.
\end{thm}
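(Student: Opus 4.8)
The plan is to derive the equivalence from Cisinski's theory of model structures on presheaf categories together with the interval characterisation of local test categories recalled in Theorem~\ref{thm:loc_test}; the statement is essentially \cite{Cisinski}, and I would organise the argument around two inputs from that theory. Write $\WA$ for the class of weak equivalences of presheaves on $A$ in the sense of~\ref{paragr:def_weq_psh}, and recall that a \emph{localizer} on $\pref{A}$ is a class $W$ of morphisms containing the identities, having the two-out-of-three property, containing every morphism with the right lifting property against all monomorphisms, and such that the monomorphisms belonging to $W$ are closed under pushout and transfinite composition; $W$ is \emph{accessible} when it is the smallest localizer containing some \emph{set} of morphisms. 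The first input is Cisinski's fundamental existence theorem: every accessible localizer $W$ on $\pref{A}$ is the class of weak equivalences of a combinatorial and proper model structure on $\pref{A}$ whose cofibrations are exactly the monomorphisms. The second input, going back to Grothendieck and Maltsiniotis, is that $A$ is a local test category if and only if $\WA$ is a localizer, and that when this is so $\WA$ is the smallest localizer on $\pref{A}$, hence in particular accessible.

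Granting these, the implication from the first condition to the second is immediate: if $A$ is a local test category, then $\WA$ is an accessible localizer, so by the existence theorem it is the class of weak equivalences of a combinatorial, proper model structure on $\pref{A}$ whose cofibrations are the monomorphisms. If one prefers to exhibit this model structure concretely rather than invoke the existence theorem as a black box, one uses Theorem~\ref{thm:loc_test} to choose a locally aspherical separating interval $I$, forms the saturated class of $I$-anodyne extensions generated by the pushout-products of $\partial_0, \partial_1 \colon \ast \to I$ with the boundary inclusions of representables, and runs the standard small object and cylinder arguments; here local asphericity of $I$ is precisely what makes the resulting class of weak equivalences coincide with $\WA$. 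The model structure is combinatorial since it is cofibrantly generated by a set and $\pref{A}$ is locally presentable; it is left proper because every object is cofibrant, the inclusion of the empty presheaf into any presheaf being a monomorphism; and right properness is part of the general statement.

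For the converse, suppose $\pref{A}$ carries a model structure with cofibrations the monomorphisms and weak equivalences $\WA$. Then $\WA$ is a localizer: it contains the identities and satisfies two-out-of-three by the model category axioms; since the cofibrations are the monomorphisms, the maps with the right lifting property against all monomorphisms are the trivial fibrations of the model structure, hence weak equivalences; and the monomorphisms belonging to $\WA$ are exactly the trivial cofibrations, which are closed under pushout and transfinite composition. By the second input above, $A$ is therefore a local test category. The last assertion follows by combining the combinatoriality and properness recorded above with the fact that a model structure on $\pref{A}$ is determined by its classes of cofibrations and weak equivalences.

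The real obstacle lies in the second input and, for the concrete construction, in the identification of the weak equivalences of the $I$-generated model structure with $\WA$: proving that for a local test category $\WA$ is exactly the minimal localizer, and that this minimal localizer is accessible, is the technical heart of the matter, the remainder being formal manipulation of the model category axioms and an appeal to the existence theorem for accessible localizers.
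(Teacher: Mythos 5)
Your proposal follows essentially the same route as the paper, whose proof of this theorem is a direct citation of Cisinski's theory: the implication from (a) to (b) via the existence theorem for accessible localisers, the converse via the characterisation of local test categories as those $A$ for which $\WA$ is a localiser (and then the minimal one), and you correctly locate the technical heart in that second input. One step, however, is overstated: it is \emph{not} true that every accessible localiser on $\pref{A}$ yields a \emph{right proper} model structure. The class of weak categorical equivalences on $\pref{\Delta}$ is an accessible localiser whose associated model structure (Joyal's) has the monomorphisms as cofibrations and fails to be right proper. Left properness is indeed automatic since all objects are cofibrant, but right properness requires an argument specific to $\WA$ --- this is exactly why the paper cites separate results of \cite{Cisinski} (Corollary~4.2.19 and Example~4.3.22, which exploit the fact that $\WA$ is a \emph{proper} localiser, e.g.\ via its stability under pullback along the relevant fibrations) rather than deducing properness from accessibility alone. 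With that correction, the rest of your argument, including the verification that a model structure as in (b) forces $\WA$ to be a localiser, is sound.
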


\begin{proof}
  See \cite[Corollary 4.2.18]{Cisinski} for the implication
  $\ref{item:tl_def} \Rightarrow \ref{item:tl_mcf}$ (and the fact that the
  model category structure is combinatorial) and
  \cite[Theorem~4.1.19]{Cisinski} for the reciprocal. The properness under
  these assumptions follows from \cite[Corollary~4.2.19 and Example
  4.3.22]{Cisinski}.
\end{proof}

We now characterise test categories in terms of model categories.

\begin{paragr}\label{paragr:def_lambda}
Let $A$ be a small category. Denote by 
\[ \lambda_! = Ni_A : \pref{A} \to \pref{\Delta} \]
the composition of $i_A$ with the nerve functor. The functor $\lambda_!$
preserves colimits (see for instance \cite[Corollary 3.2.10]{Cisinski}) and
hence admits a right adjoint $\lambda^\ast$. We thus have an adjoint pair
\[ \lambda_! : \pref{A} \rightleftarrows \pref{\Delta} : \lambda^\ast. \]
The functor $\lambda_!$ also preserves pullbacks and hence monomorphisms.
\end{paragr}

\begin{thm}\label{thm:test_model}
Let $A$ be a small category. The following conditions are equivalent:
\begin{enumerate}
  \item\label{item:tc} $A$ is a test category;
  \item\label{item:tc_mcf} there exists a model category structure on $\pref{A}$ whose
    cofibrations are the monomorphisms and for which the adjoint pair
    \[ 
    \lambda_! : \pref{A} \rightleftarrows \pref{\Delta} : \lambda^*
    \]
    is a Quillen equivalence, where the category $\pref{\Delta}$ of
    simplicial sets is endowed with the Kan--Quillen model category
    structure.
\end{enumerate}
\end{thm}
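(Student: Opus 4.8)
The plan is to use Theorem~\ref{thm:model_cat_local_test} to produce the model structure required in condition~\ref{item:tc_mcf}, and to recognise the Quillen-equivalence requirement as the assertion that $\lambda_!$ induces an equivalence of homotopy categories.

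For the implication \ref{item:tc}~$\Rightarrow$~\ref{item:tc_mcf}: a test category is, by definition, both a local test category and a weak test category. By the first property and Theorem~\ref{thm:model_cat_local_test}, the category $\pref{A}$ carries a model structure whose cofibrations are the monomorphisms and whose weak equivalences are the weak equivalences of presheaves of~\ref{paragr:def_weq_psh}; by the very definition of the latter, a morphism $f$ of $\pref{A}$ is a weak equivalence if and only if $\lambda_!(f) = Ni_A(f)$ is a weak homotopy equivalence of simplicial sets, so $\lambda_!$ preserves and reflects weak equivalences, and since it also preserves monomorphisms (\ref{paragr:def_lambda}) it is a left Quillen functor. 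A Quillen adjunction is a Quillen equivalence as soon as its total left derived functor is an equivalence of categories, so it suffices to check that the functor $\operatorname{Ho}(\pref{A}) \to \operatorname{Ho}(\pref{\Delta})$ induced by $\lambda_!$ — which agrees with $\mathbf{L}\lambda_!$, since $\lambda_!$ preserves all weak equivalences and every object of $\pref{A}$ is cofibrant — is an equivalence. This is where I would use that $A$ is a weak test category: the counit $\varepsilon_C : i_Ai_A^*C \to C$ is then a weak equivalence of categories for every small category $C$, and applying the nerve to a triangle identity of the adjunction $i_A \dashv i_A^*$ and invoking two-out-of-three shows that the unit $X \to i_A^*i_AX$ is a weak equivalence of presheaves for every $X$, while a diagram chase with the naturality squares of $\varepsilon$, passed through the nerve, shows that $i_A^*$ carries weak equivalences of categories to weak equivalences of presheaves. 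Since $i_A$ also preserves weak equivalences (directly from~\ref{paragr:def_weq_psh}), the pair $i_A, i_A^*$ descends to the localisations and, through $\eta$ and $\varepsilon$, exhibits mutually inverse equivalences between $\operatorname{Ho}(\pref{A})$ and the localisation $\operatorname{Ho}(\Cat)$ of $\Cat$ at the weak equivalences of categories. Composing with the classical equivalence $\operatorname{Ho}(\Cat) \simeq \operatorname{Ho}(\pref{\Delta})$ induced by the nerve (see e.g.~\cite{Maltsi} or~\cite{Cisinski}) then shows that $\mathbf{L}\lambda_!$ is an equivalence, as wanted.

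For the converse \ref{item:tc_mcf}~$\Rightarrow$~\ref{item:tc}: suppose such a model structure is given. Since $\lambda_!$ is left Quillen and every object of $\pref{A}$ is cofibrant, $\lambda_!$ preserves all weak equivalences; since $\lambda_! \dashv \lambda^*$ is a Quillen equivalence, $\mathbf{L}\lambda_!$ is an equivalence of categories, hence conservative, so $\lambda_!$ reflects weak equivalences as well. Therefore $f$ is a weak equivalence of the given model structure if and only if $\lambda_!(f)$ is a weak homotopy equivalence, that is, if and only if $f$ is a weak equivalence of presheaves in the sense of~\ref{paragr:def_weq_psh}; the given model structure thus coincides with the one of condition~\ref{item:tl_mcf} of Theorem~\ref{thm:model_cat_local_test}, and that theorem shows that $A$ is a local test category. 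Finally, the terminal presheaf $\ast$ is both fibrant and cofibrant, hence terminal in $\operatorname{Ho}(\pref{A})$, so its image under the equivalence $\mathbf{L}\lambda_!$ — which is $\lambda_!(\ast) = N(A)$, the category of elements of the terminal presheaf being $A$ itself — is terminal in $\operatorname{Ho}(\pref{\Delta})$, i.e.\ $N(A)$ is weakly contractible. Thus $A$ is aspherical, and Proposition~\ref{prop:test_cat_asph} lets us conclude that $A$ is a test category.

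The step I expect to be the main obstacle is, in the forward direction, the claim that $\mathbf{L}\lambda_!$ is an equivalence of homotopy categories. The two points requiring care there are: verifying that the adjunction $i_A \dashv i_A^*$ genuinely descends to the localisations and becomes an adjoint equivalence — one must check that \emph{both} functors preserve the relevant weak equivalences and that the unit and counit become invertible, which is precisely where the weak test category hypothesis is used in an essential way — and the input that the nerve identifies $\operatorname{Ho}(\Cat)$ with the homotopy category of spaces, which, though standard, is itself a substantial theorem. Alternatively, the whole forward implication can be quoted from~\cite{Cisinski}.
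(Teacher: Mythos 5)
Your proof is correct, and the converse direction \ref{item:tc_mcf}~$\Rightarrow$~\ref{item:tc} is essentially the paper's argument: one identifies the weak equivalences of the given model structure with those of~\ref{paragr:def_weq_psh} using that every object is cofibrant and that a Quillen equivalence with invertible derived functor detects weak equivalences, deduces local testness from Theorem~\ref{thm:model_cat_local_test}, and extracts asphericity of $A$ from the equivalence $\mathbf{L}\lambda_!$ (the paper evaluates the derived counit at the fibrant object $\Delta_0$ and uses $\lambda_!\lambda^*(\Delta_0) \simeq N(A)$; you instead observe that the equivalence must send the terminal object $\ast$ of $\operatorname{Ho}(\pref{A})$ to a terminal object of $\operatorname{Ho}(\pref{\Delta})$ --- these are two phrasings of the same computation).

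Where you genuinely diverge is the forward implication. The paper simply quotes \cite[Proposition 4.2.26 and Remark 4.2.27]{Cisinski}, whereas you unfold a direct proof: starting from the model structure supplied by Theorem~\ref{thm:model_cat_local_test}, you use the weak test category hypothesis together with the triangle identities and the naturality of the counit $\eps_C$ to show that the adjunction $i_A \dashv i_A^*$ descends to an adjoint equivalence between $\operatorname{Ho}(\pref{A})$ and the localisation of $\Cat$ at the weak equivalences, and then compose with the classical theorem (Illusie/Quillen/Thomason) that the nerve induces an equivalence of that localisation with $\operatorname{Ho}(\pref{\Delta})$. The two-out-of-three manipulations you sketch do work: the triangle identity $\eps_{i_AX}\circ i_A(\eta_X)=\id{i_AX}$ gives invertibility of the unit in the homotopy category, and the naturality square for $\eps$ gives that $i_A^*$ preserves weak equivalences, so the descent to localisations is legitimate. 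What your route buys is self-containedness relative to the definitions of this paper, at the price of importing the (substantial, but standard) theorem on $N : \Cat \to \pref{\Delta}$; what the paper's citation buys is brevity, the cited result in \cite{Cisinski} being proved by essentially the argument you give. Both are valid.
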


\begin{proof}
  The implication $\ref{item:tc} \Rightarrow \ref{item:tc_mcf}$
  is a consequence of \cite[Proposition 4.2.26 and Remark~4.2.27]{Cisinski}.
  Let us prove the converse. Since every object of $\pref{A}$ is cofibrant,
  the left Quillen functor $\lambda_!$ preserves and detects weak
  equivalences. This shows that the weak equivalences of the model category
  structure on $\pref{A}$ are the weak equivalences of presheaves as defined
  in~\ref{paragr:def_weq_psh}. In particular, by
  Theorem~\ref{thm:model_cat_local_test}, $A$ is a local test category.
  By Proposition~\ref{prop:test_cat_asph}, to conclude the proof, it
  suffices to show that $A$ is aspherical. But since $\Delta_0$ is a fibrant
  simplicial set, the morphism $\lambda_! \lambda^\ast(\Delta_0) \to
  \Delta_0$ is a weak equivalence, and since $\lambda_!
  \lambda^\ast(\Delta_0) \simeq N(A)$ we get the result.
\end{proof}

\begin{rem}
We saw in the proof that the model category structure of \ref{item:tc_mcf}
is actually unique (and in particular coincides with the one of
Theorem~\ref{thm:model_cat_local_test}).
\end{rem}

\section{A proof that $\Delta$ is a test category}\label{sec:Delta}

In this section, we give a new proof of the fact that the simplex category
$\Delta$ is a test category. The proof for $\Omega$ will follow a similar
pattern.

\begin{paragr}\label{paragr:Delta_test}
To prove that $\Delta$ is a test category, it suffices to show that for any
$n \ge 0$, the simplicial set $\Delta_1 \times \Delta_n$ is aspherical, where
$\Delta_m$ denotes the standard $m$-simplex. Indeed, since $\Delta$ has a
terminal object, it is aspherical. Moreover, the object~$\Delta_1$ is
clearly a separating interval and our claim thus follows from
Proposition~\ref{prop:test_cat_asph} and Theorem~\ref{thm:loc_test}.
\end{paragr}

From now on, we fix $m, n \ge 0$. We will prove more generally that
$\Delta_m \times \Delta_n$ is aspherical. This fact is well-known and
follows from classical results but we will give an elementary proof in the
spirit of the theory of test categories.

\begin{paragr}\label{paragr:simp_shuffles}
Recall that the simplicial set $\Delta_m \times \Delta_n$ can be written as
a union of subsimplicial sets
\[
  \Delta_m \times \Delta_n = \bigcup_{\sigma \in \Sh{m}{n}} F_\sigma,
\]
where $\Sh{m}{n}$ is the set of $(m, n)$-shuffles and each $F_\sigma$ is
isomorphic to $\Delta_{m+n}$ (see for instance \cite[Chapter II, Section
5]{GabZis}). The only additional fact we will need about these $F_\sigma$'s
is that they all contain the vertex $(0, 0)$ of $\Delta_m \times \Delta_n$.
\end{paragr}

To prove that $\Delta_m \times \Delta_n$ is aspherical, we will use the
following general lemma:

\begin{lemma}\label{lemma:MV}
Let $A$ be a small category and let $F$ be a presheaf on $A$. Suppose that
$F$ can be written as a non-empty finite union of subpresheaves
\[
F = \bigcup_{i \in I} F_i
\]
satisfying the following condition: for every non-empty $J \subseteq I$, the
intersection presheaf
\[
F_J = \bigcap_{j \in J} F_j
\]
is aspherical. Then $F$ is aspherical.
\end{lemma}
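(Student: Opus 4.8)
The plan is to reduce the statement to a homotopy-colimit argument using the category of elements. First I would recall that, by definition~\ref{paragr:def_weq_psh}, a presheaf $F$ on $A$ is aspherical precisely when the category $A/F$ of its elements is aspherical, i.e.\ when $N(A/F)$ is weakly contractible. Since $i_A$ preserves colimits, a finite union $F = \bigcup_{i \in I} F_i$ of subpresheaves (which is a colimit of the $F_i$ and the $F_J$ along the inclusion maps) is sent by $i_A$ to the corresponding colimit of the categories $A/F_i$ glued along the $A/F_J$; concretely, $A/F$ is the union of the full subcategories $A/F_i$ inside $A/F$, with $A/F_i \cap A/F_j = A/F_J$ for $J = \{i,j\}$, and more generally $\bigcap_{j \in J} A/F_j = A/F_J$. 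So it suffices to prove the following purely categorical fact: if a small category $\mathcal{C}$ is a finite union of full subcategories $\mathcal{C}_i$, $i \in I \neq \emptyset$, such that every finite nonempty intersection $\mathcal{C}_J = \bigcap_{j\in J}\mathcal{C}_j$ is aspherical, then $\mathcal{C}$ is aspherical.

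For this I would pass to nerves and invoke a Mayer--Vietoris / Bousfield--Kan style argument. The nerve $N(\mathcal{C})$ is the union of the subcomplexes $N(\mathcal{C}_i)$, and because the $\mathcal{C}_i$ are \emph{full} subcategories one checks that $N(\mathcal{C}_i) \cap N(\mathcal{C}_j) = N(\mathcal{C}_i \cap \mathcal{C}_j) = N(\mathcal{C}_J)$, and likewise for higher intersections (a simplex of $N(\mathcal{C})$ lies in $N(\mathcal{C}_j)$ iff all its vertices do, by fullness). Thus $\{N(\mathcal{C}_i)\}_{i\in I}$ is a finite closed cover of the simplicial set $N(\mathcal{C})$ whose nerve-of-the-cover is the barycentric data with all intersections weakly contractible by hypothesis. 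One then concludes that $N(\mathcal{C})$ is weakly homotopy equivalent to the nerve of the poset $P$ of nonempty $J \subseteq I$ with $\mathcal{C}_J \neq \emptyset$ — equivalently, that the natural map $\operatorname*{hocolim}_{J \in P^{\mathrm{op}}} N(\mathcal{C}_J) \to N(\mathcal{C})$ is a weak equivalence — and since each $N(\mathcal{C}_J)$ is contractible, this homotopy colimit is weakly equivalent to $N(P) = N(P^{\mathrm{op}})$, the nerve of a nonempty poset closed under taking nonempty subsets; such a poset has a cofinal point structure making it contractible (for instance it is the opposite of the face poset of a nonempty simplicial complex, which is collapsible; alternatively one proves $N(\mathcal{C})$ contractible directly by induction on $|I|$ using the gluing square below).

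In more detail, the induction I would actually carry out: for $|I| = 1$ the statement is the hypothesis. For the inductive step write $I = I' \sqcup \{i_0\}$, set $\mathcal{D} = \bigcup_{i \in I'} \mathcal{C}_i$ and $\mathcal{E} = \mathcal{C}_{i_0}$, so $\mathcal{C} = \mathcal{D} \cup \mathcal{E}$ and $\mathcal{D} \cap \mathcal{E} = \bigcup_{i \in I'} (\mathcal{C}_i \cap \mathcal{C}_{i_0})$. By the inductive hypothesis applied to the cover $\{\mathcal{C}_i\}_{i\in I'}$ of $\mathcal{D}$ and to the cover $\{\mathcal{C}_i \cap \mathcal{C}_{i_0}\}_{i\in I'}$ of $\mathcal{D}\cap\mathcal{E}$ — whose intersections are the $\mathcal{C}_J$ with $i_0 \in J$, hence aspherical — both $\mathcal{D}$ and $\mathcal{D}\cap\mathcal{E}$ are aspherical, and $\mathcal{E}$ is aspherical by hypothesis. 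Since $N(\mathcal{C})$ is the pushout of the cofibration $N(\mathcal{D}\cap\mathcal{E}) \hookrightarrow N(\mathcal{D})$ along $N(\mathcal{D}\cap\mathcal{E}) \hookrightarrow N(\mathcal{E})$, and pushouts along cofibrations in simplicial sets are homotopy pushouts, $N(\mathcal{C})$ is weakly equivalent to the homotopy pushout of $\ast \leftarrow N(\mathcal{D}\cap\mathcal{E}) \to \ast$ with $N(\mathcal{D}\cap\mathcal{E})$ contractible, hence contractible.

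The main obstacle is entirely bookkeeping rather than conceptual: verifying carefully that forming the category of elements $i_A = (F \mapsto A/F)$ takes the lattice of subpresheaves of $F$ to the lattice of full subcategories of $A/F$ — in particular that $i_A(F_i \cap F_j) = i_A(F_i) \cap i_A(F_j)$ and that the $i_A(F_i)$ are full in $i_A(F) = A/F$ — and that the subpresheaf inclusions induce the expected closed cover of the nerve. Once this translation is in place and the fullness is used to identify intersections of nerves with nerves of intersections, the homotopy-pushout induction runs without incident.
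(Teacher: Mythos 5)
Your proof is correct and follows essentially the same route as the paper: the paper reduces to the binary case, which it simply cites as \cite[Proposition 1.2.7]{Maltsi}, and then handles the finite case by induction exactly as you do. Your direct nerve/homotopy-pushout argument for the binary case (resting on the fact that each $A/F_i \hookto A/F$ is a sieve, so that $N(A/F)$ really is the union of the $N(A/F_i)$ with $N(A/F_i)\cap N(A/F_j)=N(A/F_{\{i,j\}})$) is precisely the content of that cited proposition.
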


\begin{proof}
The case of a binary union is \cite[Proposition 1.2.7]{Maltsi}. The finite
case follows by induction.
\end{proof}

\begin{prop}\label{prop:Delta_tot_asph}
The simplicial set $\Delta_m \times \Delta_n$ is aspherical.
\end{prop}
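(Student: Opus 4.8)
The plan is to apply Lemma~\ref{lemma:MV} to the finite union
$\Delta_m \times \Delta_n = \bigcup_{\sigma \in \Sh{m}{n}} F_\sigma$
recalled in~\ref{paragr:simp_shuffles}. Since this union is non-empty and
finite, it suffices to check that for every non-empty subset
$J \subseteq \Sh{m}{n}$, the intersection $F_J = \bigcap_{\sigma \in J} F_\sigma$
is aspherical. Each $F_\sigma$ is a subsimplicial set of $\Delta_m \times \Delta_n$
isomorphic to $\Delta_{m+n}$, hence it is the simplicial subset spanned by a
certain linearly ordered sequence of vertices of $\Delta_m \times \Delta_n$;
concretely, $F_\sigma$ is the nerve of a linearly ordered subset $P_\sigma$ of
the poset $[m] \times [n]$ (with the product order), a maximal chain from
$(0,0)$ to $(m,n)$.

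The key observation is that an intersection of subsimplicial sets of the form
$N(P)$, for $P$ a sub-poset of $[m]\times[n]$, is again of this form: namely
$F_J = \bigcap_{\sigma \in J} N(P_\sigma) = N\big(\bigcap_{\sigma \in J} P_\sigma\big)$,
since a simplex of $\Delta_m \times \Delta_n$ lies in every $N(P_\sigma)$ if and
only if all of its vertices lie in every $P_\sigma$. Thus $F_J$ is the nerve of
a poset $Q_J = \bigcap_{\sigma \in J} P_\sigma \subseteq [m] \times [n]$. By the
additional fact recorded in~\ref{paragr:simp_shuffles}, every $P_\sigma$ contains
the vertex $(0,0)$, so $(0,0) \in Q_J$ and in particular $Q_J$ is non-empty.
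Now $(0,0)$ is the least element of $[m]\times[n]$, hence it is the least
element of the sub-poset $Q_J$ as well; so $Q_J$ has an initial object and its
nerve $F_J = N(Q_J)$ is aspherical (a category with an initial or terminal
object is aspherical, as its nerve is contractible — indeed the identity functor
is connected by a natural transformation to the constant functor at the initial
object). Therefore every $F_J$ is aspherical, and Lemma~\ref{lemma:MV} gives
that $\Delta_m \times \Delta_n$ is aspherical.

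I expect the only point needing a little care is the identification
$F_J = N(Q_J)$, i.e.\ making precise that each shuffle simplex $F_\sigma$ is the
nerve of a chain $P_\sigma$ in $[m]\times[n]$ and that nerves of sub-posets are
closed under intersection inside $N([m]\times[n]) = \Delta_m \times \Delta_n$;
this is routine but is where the combinatorics of shuffles enters. Everything
else — non-emptiness of the union, non-emptiness of each $Q_J$ via the common
vertex $(0,0)$, and asphericity of a poset with a least element — is immediate.
Note that we do not even need the finer structure of shuffles beyond the fact
that each $F_\sigma$ is (the nerve of) a sub-poset containing $(0,0)$; the
combinatorial content is entirely absorbed into Lemma~\ref{lemma:MV}.
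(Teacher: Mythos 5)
Your proposal is correct and follows essentially the same route as the paper: decompose $\Delta_m\times\Delta_n$ by shuffles, apply Lemma~\ref{lemma:MV}, identify each intersection $F_J$ as the nerve of the intersection of the corresponding subposets of $[m]\times[n]$, and use the common vertex $(0,0)$ to conclude. The only (immaterial) difference is in the last step: the paper observes that this intersection is a non-empty finite linear order, so $F_J$ is representable, whereas you observe that it has $(0,0)$ as a least element, so its nerve is contractible — both are valid.
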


\begin{proof}
Let $J$ be a non-empty set of $(m, n)$-shuffles. By Lemma~\ref{lemma:MV}, it
suffices to show that
\[ F_J = \bigcap_{j \in J} F_j \subset \Delta_m \times \Delta_n \]
is aspherical. We will show that $F_J$ is actually a representable presheaf.
Indeed, each of the $F_j$'s is the nerve of a subposet of the poset associated
to $\Delta_m \times \Delta_n$ and the intersection $F_J$ is the nerve of the
intersection of these subposets. The result follows from the fact that this
intersection subposet is a non-empty (as it contains~$(0, 0)$) finite linear
order.
\end{proof}

\begin{coro}
The simplex category $\Delta$ is a test category.
\end{coro}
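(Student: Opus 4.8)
The plan is to assemble the ingredients already in place, following the roadmap of~\ref{paragr:Delta_test}. By Proposition~\ref{prop:test_cat_asph}, it suffices to verify two things: that $\Delta$ is aspherical, and that $\Delta$ is a local test category. The first point is immediate, since $\Delta$ has a terminal object, namely $[0]$, and a small category with a terminal object is aspherical (the identity functor is linked to the constant functor at the terminal object by a natural transformation, so its nerve is a deformation retract onto a point).

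For the local test property I would apply condition~(c) of Theorem~\ref{thm:loc_test}: it is enough to exhibit a locally aspherical separating interval in $\sSet$. The candidate is $\Delta_1$ equipped with its two vertices $\partial_0, \partial_1 : \ast \to \Delta_1$. The induced map $\ast \amalg \ast \to \Delta_1$ is the inclusion of the two endpoints, hence a monomorphism, so this interval is separating. For local asphericity, by the remark following the definition it suffices to check that the projection $\Delta_1 \times \Delta_n \to \Delta_n$ is a weak equivalence for every representable $\Delta_n$. But both its source and its target are aspherical: the target because $\Delta_n$ has a terminal object, and the source by Proposition~\ref{prop:Delta_tot_asph} applied with $m = 1$. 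A morphism between aspherical presheaves is a weak equivalence by two-out-of-three applied to the maps to the terminal presheaf, so the projection is a weak equivalence as required. Hence $\Delta_1$ is a locally aspherical separating interval, and Theorem~\ref{thm:loc_test} shows that $\Delta$ is a local test category.

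Combining these two points through Proposition~\ref{prop:test_cat_asph} yields that $\Delta$ is a test category. There is no genuine obstacle remaining at this stage: all the substantive work is contained in Proposition~\ref{prop:Delta_tot_asph}, which expresses $\Delta_m \times \Delta_n$ as the union of the representables $F_\sigma$ indexed by $(m,n)$-shuffles, identifies each nonempty finite intersection of the $F_\sigma$'s with the nerve of a finite linear order containing $(0,0)$, and concludes by the Mayer--Vietoris-type Lemma~\ref{lemma:MV}. The only care needed in writing the corollary is to match the abstract interval criterion of Theorem~\ref{thm:loc_test} with the concrete interval $\Delta_1$, and to use that local asphericity may be tested on representables only.
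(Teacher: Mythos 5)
Your proposal is correct and follows exactly the route the paper takes: the corollary is deduced from Proposition~\ref{prop:Delta_tot_asph} via the reduction spelled out in~\ref{paragr:Delta_test} (asphericity of $\Delta$ from its terminal object, plus Theorem~\ref{thm:loc_test} applied to the separating interval $\Delta_1$, whose local asphericity is checked on representables). The only detail you make explicit that the paper leaves implicit is the two-out-of-three argument showing a map between aspherical presheaves is a weak equivalence, which is a correct and harmless addition.
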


\begin{proof}
This follows from the previous proposition (see \ref{paragr:Delta_test}).
\end{proof}

\begin{rem}
  The proof actually shows that $\Delta$ is a \emph{strict} test category (see
  \cite[Section~1.6]{Maltsi}).
\end{rem}

\section{The tree category $\Omega$}

The tree category $\Omega$ was introduced by the third author and Weiss in
\cite{MoerdWeissDendSets}. The purpose of this section is to recall some of
the main definitions related to $\Omega$.

\begin{paragr}
By an \ndef{operad}, we will always mean a symmetric coloured operad. We
will denote by $\Oper$ the category of operads. If $P$ is an operad and
$c_1, \dots, c_n, d$ are colours, we will denote by $P(c_1, \dots, c_n; d)$
the set of operations in $P$ from $c_1, \dots, c_n$ to $d$.
\end{paragr}

\begin{paragr}
Similarly, by a \ndef{tree}, we will always mean a finite non-planar rooted
tree. Here is an example of such a tree:
\[
T =
\xy
(0, 0)*{
\xy
(0, 0)*{}="a"; 
(0, 5)*=0{\bullet\annot{u}}="b";
(-5, 10)*=0{\bullet\annot{v}}="c1";
(3, 10)*=0{\bullet\annot{w}}="c2";
(10, 10)*{}="c3";
(-9, 15)*{}="e1";
(-1, 15)*{}="e2";
\ar@{-}"a";"b"^{a\!}
\ar@{-}"b";"c1"^(.75){b\!\!\!}
\ar@{-}"b";"c2"^(.45){e\!\!}
\ar@{-}"b";"c3"_(.95){\!\!\!f}
\ar@{-}"c1";"e1"^(.90){c\!\!}
\ar@{-}"c1";"e2"_(1){\!\!\!\!d}
\endxy}
\endxy
\]

Every tree $T$ generates a coloured operad $\Omega(T)$ in the following way.
Choose a planar structure on $T$ and consider the non-symmetric coloured
collection $\Omega_0(T)$ whose colours are the edges of $T$ and whose
operations are given by the vertices of~$T$ (the planar structure fixes the
source of such an operation). The operad $\Omega(T)$ is then the free
coloured operad on $\Omega_0(T)$. It does not depend on the choice of the
planar structure.

We will denote by $\eta$ the tree with one edge and no vertices, and, for $n
\ge 0$, by~$C_n$ the $n$-corolla, that is, the tree with one vertex and
$n$ leaves.
\end{paragr}

\begin{paragr}
The category $\Omega$ is defined in the following way. Its objects are
trees, and if $S$ and $T$ are two trees, a morphism $S \to T$ in $\Omega$ is
given by a morphism of operads $\Omega(S) \to \Omega(T)$. A presheaf on
$\Omega$ is called a \ndef{dendroidal set}. We will consider the Yoneda
embedding $\Omega \hookto \pref{\Omega}$ as an inclusion, thus identifying
each object of $\Omega$ with its associated dendroidal set.
\end{paragr}

\begin{paragr}
There is a fully faithful functor $i : \Delta \to \Omega$ defined by
\[
\Delta_n \mapsto
L_n =
\xy
(0, 0)*{
\xy
(0, 0)*{}="a"; 
(0, 5)*=0{\bullet}="b";
(0, 10)*=0{\bullet}="c"; 
(0, 15)*=0{\bullet}="d";
(0, 20)*=0{\bullet}="e";
(0, 25)*{}="f";
\ar@{-}"a";"b"_{n}
\ar@{-}"b";"c"_{n-1}
\ar@{.}"c";"d"
\ar@{-}"d";"e"_{1}
\ar@{-}"e";"f"_{0}
\endxy}
\endxy
\]
We will consider $i$ as an inclusion and we will thus identify $\Delta_n$
with $L_n$. In particular, $\Delta_0$ will be identified with $\eta$. The
image of $i$ being a sieve, the left Kan extension $i_! : \sSet \to \dSet$
sends a simplicial set $X$ to the dendroidal set obtained by extending $X$
by $\varnothing$ at trees not in the image of $i$. This functor is fully
faithful.
\end{paragr}

\begin{paragr}
Every map of $\Omega$ factors as a \ndef{degeneracy} followed by an
isomorphism followed by a \ndef{face map}. The face maps are generated by
elementary faces. An \ndef{elementary face} is either an inner face or
an outer face defined as follows. Let $T$ be a tree. For $e$ an inner edge
of~$T$ (that is an edge between two vertices), define $T/e$ to be the
tree obtained by contracting $e$. There is a map $\partial_e : T/e \to T$ in
$\Omega$ corresponding to the composition in $\Omega(T)$ of the two
operations associated to the end-vertices of~$e$. Such a map is called an
\ndef{inner face}. If $T$ has at least two vertices and $v$ is a vertex with
exactly one adjacent inner edge, define $T/v$ to be the tree obtained by
chopping off~$v$. There is a map $\partial_v : T/v \to T$ corresponding to
the obvious inclusion of operads. If $T$ has exactly one vertex, that is, if
$T$ is a corolla, there is one map $\eta \to T$ for each edge of $T$. These
two kinds of maps are called \ndef{outer faces}.

Similarly, degeneracies are generated by elementary degeneracies
defined as follows. If $T$ is a tree, for each edge $e$ there is an
\ndef{elementary degeneracy} $\sigma_e : S \to T$, where $S$ is obtained from $T$ by
inserting a vertex in the middle of $e$ and $\sigma_e$ corresponds to the
identity of $e$ in the operad $\Omega(T)$.

For more on faces and degeneracies, see \cite[Section
3]{MoerdWeissDendSets}.
\end{paragr}

\begin{paragr}
For a tree $T$ and an inner edge $e$ of $T$, we denote by
$\Lambda^e_T$ the maximal subobject of $T$ in $\dSet$ not containing the face
$\partial_e : T/e \to T$. The inclusions of the form $\Lambda^e_T \hookto T$
are called \ndef{inner horn inclusions}. A dendroidal set is an
\ndef{\oo-operad} if it has the extension property with respect to every
inner horn inclusion.
\end{paragr}

\begin{paragr}
A monomorphism of dendroidal sets $X \hookto Y$ is said to be \ndef{normal}
if, for any tree $T$, the action of the group of automorphisms of $T$ in
$\Omega$ on $Y(T)\backslash X(T)$ is free. The class of normal monomorphisms
can be characterised as the saturation (i.e., the closure under pushout, transfinite
composition and retracts) of the set~$\{\partial T \hookto T \mid T \in
\Omega\}$, where $\partial T$ denotes the maximal proper subdendroidal set
of $T$.

A dendroidal set $X$ is said to be \ndef{normal} if the monomorphism
$\varnothing \to X$ is normal. For instance, trees are normal dendroidal
sets. We will need the following two facts about normal dendroidal sets: if
$f : X \to Y$ is a map of dendroidal sets with $Y$ normal, then $X$ is normal;
if moreover $f$ is a monomorphism, then $f$ is a normal monomorphism.
\end{paragr}

We can now formulate one of the main results of \cite{CisMoerdDend}:

\begin{thm}
\tolerance=1000
  There exists a combinatorial model category structure on $\dSet$ whose
  cofibrations are the normal monomorphisms and whose fibrant objects are
  the \oo-operads.
\end{thm}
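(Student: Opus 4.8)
The construction goes through Cisinski's machinery for producing model structures on presheaf categories~\cite{Cisinski}, used in the variant in which the cofibrations are a prescribed cellular class of monomorphisms rather than all of them. Take as generating cofibrations the boundary inclusions $\{\partial T \hookto T \mid T \in \Omega\}$: by the characterisation recalled above, the class of normal monomorphisms is exactly their saturation, and since $\dSet$ is a presheaf topos, hence locally presentable, any model structure obtained this way is automatically combinatorial. For the homotopical datum we use the cylinder $X \mapsto X \tensBV J$, where $J$ is the dendroidal set associated with the free-living isomorphism (the nerve of the groupoid with two objects and a unique isomorphism between them), together with its two endpoint sections $\partial_0, \partial_1 : \eta \to J$. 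The first, routine verifications are that $- \tensBV J$ is an exact cylinder on $\dSet$ and that it interacts well with normality --- in particular that tensoring a normal monomorphism with $J$ again yields a normal monomorphism; both follow from the combinatorics of the Boardman--Vogt tensor product together with the stability properties of normal monomorphisms recalled above.

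Next, let $S = \{\Lambda^e_T \hookto T\}$ be the set of inner horn inclusions, and let $\mathrm{An}$ be the class of \emph{normal anodyne extensions}: the smallest class of normal monomorphisms containing $S$, stable under pushout, transfinite composition and retract, and satisfying Cisinski's anodyne-extension axioms relative to the cylinder $- \tensBV J$ with its endpoint sections. Cisinski's existence theorem then yields a combinatorial model structure on $\dSet$ whose cofibrations are the normal monomorphisms and whose fibrant objects are precisely those $X$ having the right lifting property against every map in $\mathrm{An}$. Since $S \subseteq \mathrm{An}$, every fibrant object is in particular an \oo-operad.

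It remains to prove the converse: every \oo-operad lifts against all of $\mathrm{An}$. This reduces to showing that $\mathrm{An}$ lies in the saturation of $S$ together with suitable pushout-products of the endpoint inclusions $\eta \hookto J$ with normal monomorphisms, and then that every \oo-operad lifts against the latter. Two combinatorial inputs are needed. First, a \emph{pushout-product lemma}: the pushout-product of an inner horn inclusion with an arbitrary normal monomorphism is again a normal anodyne extension (this is what controls the shape of $\mathrm{An}$). Second, the \emph{$J$-lifting property for \oo-operads}: for an \oo-operad $X$, restriction along an endpoint of $J$ makes $\underline{\Hom}(J, X) \to X$ a trivial fibration, so that $X$ indeed lifts against the pushout-products of $\eta \hookto J$ with normal monomorphisms --- the point being that edges of an \oo-operad are homotopy invertible and the fillers along $J$ can be manufactured by inner horn filling. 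Granting these, the right lifting property against $\mathrm{An}$ collapses to the right lifting property against $S$, which is exactly the defining property of an \oo-operad. The genuine obstacle is this pair of statements about the Boardman--Vogt tensor product: both are proved by a delicate filtration of tensor products of trees indexed by shuffles, parallel to --- but substantially more intricate than --- the simplicial arguments underlying the Joyal model structure on $\sSet$; this is the technical heart of~\cite{CisMoerdDend}.
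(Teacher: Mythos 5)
The paper does not prove this theorem: it is stated as one of the main results of \cite{CisMoerdDend} and simply imported by citation, so there is no internal proof to compare against. Your sketch is a faithful outline of the argument given in that reference --- Cisinski's machinery with generating cofibrations $\partial T \hookto T$, the cylinder $-\otimes J$ with $J$ the (image under $i_!$ of the) nerve of the free-living isomorphism, inner horn inclusions as generating anodynes, and the two hard inputs you isolate, namely the pushout-product lemma and the theorem that $\underline{\Hom}(J,X)\to X$ is a trivial fibration for $X$ an \oo-operad. One caution: the pushout-product lemma in the generality you assert (inner horn inclusion against an \emph{arbitrary} normal monomorphism) is precisely the point where the original arguments had gaps, owing to the failure of strict associativity of $\otimes$; what the construction actually needs, and what is safely available (compare the paper's own appeal to \cite[Section 3.4]{HeutsHinichMoerd}), are the cases where one of the two factors is a simplicial set, such as $\partial\Delta_1\hookto\Delta_1$, $\{e\}\hookto J$ and $\partial J\hookto J$.
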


This model category structure will be called the \ndef{operadic model
category structure} and its weak equivalences the \ndef{operadic weak
equivalences}. These operadic weak equivalences can be characterised in
terms of Segal core inclusions that we now define.

\begin{paragr}
Let $T$ be a tree. For each vertex $v$ with $n$ input edges, there is a map
$C_n \to T$ corresponding to the operation associated to $v$ in $\Omega(T)$.
The \ndef{Segal core} $\SC{T}$ of $T$ is the smallest subdendroidal set of
$T$ containing the images of these maps. Inclusions of the form $\SC{T}
\hookto T$ are called \ndef{Segal core inclusions}.
\end{paragr}

The following characterisation of operadic weak equivalences follows
from \cite[Corollary 6.11]{CisMoerdDend} and \cite[Proposition
2.6]{CisMoerdDendSeg}:

\begin{thm}\label{thm:Segal}
  The class of operadic weak equivalences is the smallest class $\W$
  satisfying the following properties:
  \begin{enumerate}
    \item $\W$ satisfies the 2-out-of-3 property;
    \item $\W$ contains the class of maps having the right lifting property
    with respect to normal monomorphisms;
    \item the class of normal monomorphisms which are in $\W$ is closed
    under pushout, transfinite composition and retracts.
    \item $\W$ contains the set of Segal core inclusions.
  \end{enumerate}
\end{thm}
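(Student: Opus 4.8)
The plan is to prove the two inclusions between $\W_{\operadic}$, the class of operadic weak equivalences, and $\W_0$, the smallest class of maps satisfying (a)--(d).

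\emph{The inclusion $\W_0\subseteq\W_{\operadic}$} amounts to checking that $\W_{\operadic}$ itself satisfies (a)--(d). Conditions (a)--(c) hold in any model category whose cofibrations are the normal monomorphisms: (a) is an axiom; (b) says that the trivial fibrations --- which, the cofibrations being the normal monomorphisms, are exactly the maps with the right lifting property against normal monomorphisms --- are weak equivalences; and (c) says that the trivial cofibrations are stable under pushout, transfinite composition and retracts. For (d), one uses that each Segal core inclusion $\SC{T}\hookto T$ is inner anodyne: by induction on the number of inner edges of $T$, the tree $T$ can be rebuilt from $\SC{T}$ by a finite sequence of pushouts along inner horn inclusions $\Lambda^e_S\hookto S$. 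Since the fibrant objects of the operadic model structure are the \oo-operads, the inner horn inclusions are trivial cofibrations, hence so is every Segal core inclusion, and in particular Segal core inclusions are operadic weak equivalences. Thus $\W_0\subseteq\W_{\operadic}$.

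\emph{The inclusion $\W_{\operadic}\subseteq\W_0$.} By Cisinski's machinery (cf.~\cite{Cisinski}), the class $\W_0$ --- being the smallest class satisfying (a)--(c) and containing the set of Segal core inclusions --- is the class of weak equivalences of a combinatorial model structure $\M_0$ on $\dSet$ with cofibrations the normal monomorphisms, namely the left Bousfield localization, at the Segal core inclusions, of the minimal model structure on $\dSet$ whose cofibrations are the normal monomorphisms. Since $\W_0\subseteq\W_{\operadic}$ and the operadic model structure is combinatorial with the same cofibrations, it is itself a left Bousfield localization of $\M_0$, and every \oo-operad is $\M_0$-fibrant. Because a left Bousfield localization of a combinatorial model category is determined by its class of fibrant objects, it now suffices to establish the converse: every $\M_0$-fibrant object is an \oo-operad. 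Indeed, this gives that $\M_0$ and the operadic structure have the same fibrant objects, hence coincide, whence $\W_{\operadic}=\W_0$. Now an $\M_0$-fibrant object has the right lifting property against every normal monomorphism lying in $\W_0$; since the inner horn inclusions $\Lambda^e_T\hookto T$ are normal monomorphisms (the tree $T$ being normal), it is enough to show that they belong to $\W_0$.

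\emph{The main obstacle} is precisely this last point: that the inner horn inclusions lie in $\W_0$, equivalently that a dendroidal set which is local with respect to the Segal core inclusions in the minimal model structure automatically has the inner horn filling property. This cannot be obtained by formal manipulation; it requires comparing inner horns with Segal cores through the combinatorics of trees --- decomposing a tree along its inner edges, using that Segal-locality controls the relevant derived mapping spaces out of $T$ in terms of those out of $\SC{T}$, and constructing the inner horn fillers from this Segal description. This is the content of \cite[Corollary~6.11]{CisMoerdDend} and \cite[Proposition~2.6]{CisMoerdDendSeg}, whose combination yields the theorem.
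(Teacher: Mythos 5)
Your proposal is correct and, in the end, takes the same route as the paper: the paper offers no argument beyond citing Corollary~6.11 of \emph{Dendroidal sets as models for homotopy operads} and Proposition~2.6 of \emph{Dendroidal Segal spaces and $\infty$-operads}, and your proof defers its one substantive step (that the inner horn inclusions lie in the localiser generated by the Segal core inclusions) to exactly those same two results. The surrounding reductions you supply --- verifying (a)--(d) for the operadic weak equivalences via inner anodyneness of Segal cores, and the converse via Cisinski's localiser machinery together with the fact that a model structure is determined by its cofibrations and fibrant objects --- are sound and consistent with how those references combine.
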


Recall finally that dendroidal sets are endowed with a tensor product.

\begin{paragr}
  We will denote by $\tensBV$ the Boardman--Vogt tensor product of operads
  (see \cite[Definition 2.14]{BoardVogt}).
  If $X$ and $Y$ are two dendroidal sets, their \ndef{tensor product} is defined by
  the formula
  \[ X \otimes Y = \limind_{S \to X,\, T \to Y} N_d(\Omega(S) \tensBV
  \Omega(T)),
  \]
  where $S$ and $T$ vary among trees and $N_d$ denotes the dendroidal nerve
  functor (see \cite[Example 4.2]{MoerdWeissDendSets}). This tensor product
  is symmetric but only associative up to weak equivalence (see
  \cite[Section 6.3]{HeutsHinichMoerd}). It admits $\eta$ as a unit.
  Moreover, it preserves colimits in each variable.
\end{paragr}

\section{The category $\Omega$ is aspherical}
\label{sec:Omega_asph}

The goal of this section is to show that the category $\Omega$ is
aspherical.

\begin{paragr}
The \ndef{décalage $D(T)$} of a tree $T$ is the tree 
\[ D(T) = T \amalg_{\eta} C_1, \]
where $\eta \to C_1$ is the map corresponding to the unique leaf of
$C_1$ and $\eta \to T$ is the root map of $T$, that is, the map corresponding
to the root of $T$. For each tree~$T$, we have a diagram
\[ T \xrightarrow{u^{}_T} D(T) \xleftarrow{a^{}_T} \eta, \]
where $u^{}_T : T \to D(T) = T \amalg_\eta C_1$ is the canonical map and
$a^{}_T : \eta \to D(T)$ is the root map of $D(T)$.
In other words, $D(T)$ is obtained from $T$ by adding a new unary vertex $v_T$ at the
root, with a new root edge $a_T$ coming out of it:
\[
T =
\xy
(0, 0)*{
\xy
(0, 0)*{}="a"; 
(0, 5)*=0{\bullet}="b";
(-5, 10)*{}="c1";
(5, 10)*{}="c2";
(0, 8)*{\cdots}
\ar@{-}"a";"b"^{r\!}
\ar@{-}"b";"c1"
\ar@{-}"b";"c2"
\endxy}
\endxy
\quad
\longmapsto
\quad
D(T) =
\xy
(0, 0)*{
\xy
(0, -5)*{}="r"; 
(0, 0)*=0{\bullet\annot{v^{}_T}}="a"; 
(0, 5)*=0{\bullet}="b";
(-5, 10)*{}="c1";
(5, 10)*{}="c2";
(0, 8)*{\cdots}
\ar@{-}"r";"a"^{a^{}_T\!}
\ar@{-}"a";"b"^{r\!}
\ar@{-}"b";"c1"
\ar@{-}"b";"c2"
\endxy}
\endxy
\]
The map $u^{}_T : T \to D(T)$ defined above is the outer face map
$\partial_{v^{}_T}$.
\end{paragr}

\begin{rem}
It is tempting to think at this point that we have a zigzag of natural
maps
\[ T \xrightarrow{u_T} D(T) \xleftarrow{a_T} \eta \]
and hence that $\Omega$ is aspherical. Notice though that we have not defined
the action of $D$ on the morphisms of $\Omega$. It turns out that there is
no way to make $D$ into a functor $D : \Omega \to \Omega$ for which the
maps
\[ T \xrightarrow{u^{}_T} D(T) \xleftarrow{a^{}_T} \eta \]
are natural. Indeed, consider the face map $\partial_v : S \to T$:
\[
S =
\xy
(0, 0)*{
\xy
(0, 0)*{}="a"; 
(0, 5)*=0{\bullet\annot{w}}="b";
(-5, 10)*{}="c1";
(0, 10)*{}="c2";
(5, 10)*{}="c3";
\ar@{-}"a";"b"^{c\!}
\ar@{-}"b";"c1"^(1.2){d\!\!\!}
\ar@{-}"b";"c2"^(.75){e\!}
\ar@{-}"b";"c3"_(1.2){\!\!\!\!f}
\endxy}
\endxy
\quad\longrightarrow\quad
T =
\xy
(0, 0)*{
\xy
(0, 0)*{}="a"; 
(0, 5)*=0{\bullet\annot{v}}="b";
(-5, 10)*{}="c1";
(5, 10)*=0{\bullet\annot{w}}="c2";
(0, 15)*{}="d1";
(5, 15)*{}="d2";
(10, 15)*{}="d3";
\ar@{-}"a";"b"^{a\!}
\ar@{-}"b";"c1"^(.88){b\!\!\!}
\ar@{-}"b";"c2"_(.75){\!\!\!c}
\ar@{-}"c2";"d1"^(1.2){d\!\!\!}
\ar@{-}"c2";"d2"^(.75){e\!}
\ar@{-}"c2";"d3"_(1.2){\!\!\!\!f}
\endxy}
\endxy
\]
Clearly, this map cannot be extended to a root-preserving map
\[
D(S) =
\xy
(0, 0)*{
\xy
(0, -5)*{}="r"; 
(0, 0)*=0{\bullet\annot{v^{}_S}}="a"; 
(0, 5)*=0{\bullet}="b";
(-5, 10)*{}="c1";
(0, 10)*{}="c2";
(5, 10)*{}="c3";
\ar@{-}"r";"a"^{a^{}_S\!}
\ar@{-}"a";"b"^{c\!}
\ar@{-}"b";"c1"^(1.2){d\!\!\!}
\ar@{-}"b";"c2"^(.75){e\!}
\ar@{-}"b";"c3"_(1.2){\!\!\!\!f}
\endxy}
\endxy
\quad\longrightarrow\quad
D(T) =
\xy
(0, 0)*{
\xy
(0, -5)*{}="r"; 
(0, 0)*=0{\bullet\annot{v^{}_T}}="a"; 
(0, 5)*=0{\bullet\annot{v}}="b";
(-5, 10)*{}="c1";
(5, 10)*=0{\bullet\annot{w}}="c2";
(0, 15)*{}="d1";
(5, 15)*{}="d2";
(10, 15)*{}="d3";
\ar@{-}"r";"a"^{a^{}_T\!}
\ar@{-}"a";"b"^{a\!}
\ar@{-}"b";"c1"^(.88){b\!\!\!}
\ar@{-}"b";"c2"_(.75){\!\!\!c}
\ar@{-}"c2";"d1"^(1.2){d\!\!\!}
\ar@{-}"c2";"d2"^(.75){e\!}
\ar@{-}"c2";"d3"_(1.2){\!\!\!\!f}
\endxy}
\endxy
\]
for $v^{}_S$ would have to be sent to a unary operation in $\Omega(D(T))$
from $c$ to $a^{}_T$ and there is no such operation. Note that if there
were a (nullary) vertex above $b$ in~$T$, then there would exist such an
operation. This leads to the following definition.
\end{rem}

\begin{defi}
A tree $T$ is said to be \ndef{closed} if it has no leaves, that is, if there is
a vertex above every edge of $T$. The full subcategory of $\Omega$
consisting of closed trees is denoted $\OmegaCl$.
\end{defi}

\begin{paragr}
The \ndef{closure} of a tree $T$ is the tree $\cl(T) = \overline{T}$
obtained from $T$ by adjoining a (nullary) vertex $v_l$ on top of each leaf
$l$ of $T$. We will denote by $\eta_T : T \hookrightarrow \overline{T}$ the
obvious inclusion.

If $f : S \to T$ is a map in $\Omega$, we define $\cl(f) = \overline{f} :
\overline{S} \to \overline{T}$ to be the unique map $\overline{S} \to
\overline{T}$ extending $S \to T$, in the sense that the diagram
\[
\xymatrix@C=2.5pc{
\overline{S} \ar[r]^{\overline{f}} & \overline{T} \\
S \ar[u]^{\eta^{}_S} \ar[r]_f & T \ar[u]_{\eta^{}_T}
}
\]
commutes. Since this property determines the action of $\overline{f}$ on the edges of
$\overline{S}$, there is at most one such map. Its existence follows from the
fact that $\Omega(\overline{T})$ has a (unique) nullary operation for
each of its colours.

One checks that this defines a functor $\cl : \Omega \to
\OmegaCl$ from trees to closed trees.
\end{paragr}

\begin{ex}
Consider the following external face map $\partial_w$:
\[
R =
\xy
(0, 0)*{
\xy
(0, 0)*{}="a"; 
(0, 5)*=0{\bullet\annot{v}}="b";
(-5, 10)*{}="c1";
(5, 10)*{}="c2";
\ar@{-}"a";"b"^{a\!}
\ar@{-}"b";"c1"^(.88){b\!\!\!}
\ar@{-}"b";"c2"_(.75){\!\!\!c}
\endxy}
\endxy
\quad\longrightarrow\quad
T =
\xy
(0, 0)*{
\xy
(0, 0)*{}="a"; 
(0, 5)*=0{\bullet\annot{v}}="b";
(-5, 10)*{}="c1";
(5, 10)*=0{\bullet\annot{w}}="c2";
(0, 15)*{}="d1";
(5, 15)*{}="d2";
(10, 15)*{}="d3";
\ar@{-}"a";"b"^{a\!}
\ar@{-}"b";"c1"^(.88){b\!\!\!}
\ar@{-}"b";"c2"_(.75){\!\!\!c}
\ar@{-}"c2";"d1"^(1.2){d\!\!\!}
\ar@{-}"c2";"d2"^(.75){e\!}
\ar@{-}"c2";"d3"_(1.2){\!\!\!\!f}
\endxy}
\endxy
\]
The closure $\cl(\partial_w)$ of $\partial_w$
\[
\overline{R} =
\xy
(0, 0)*{
\xy
(0, 0)*{}="a"; 
(0, 5)*=0{\bullet\annot{v}}="b";
(-5, 10)*=0{\bullet}="c1";
(5, 10)*=0{\bullet}="c2";
\ar@{-}"a";"b"^{a\!}
\ar@{-}"b";"c1"^(.88){b\!\!\!}
\ar@{-}"b";"c2"_(.75){\!\!\!c}
\endxy}
\endxy
\quad\longrightarrow\quad
\overline{T} =
\xy
(0, 0)*{
\xy
(0, 0)*{}="a"; 
(0, 5)*=0{\bullet\annot{v}}="b";
(-5, 10)*=0{\bullet}="c1";
(5, 10)*=0{\bullet\annot{w}}="c2";
(0, 15)*=0{\bullet}="d1";
(5, 15)*=0{\bullet}="d2";
(10, 15)*=0{\bullet}="d3";
\ar@{-}"a";"b"^{a\!}
\ar@{-}"b";"c1"^(.88){b\!\!\!}
\ar@{-}"b";"c2"_(.75){\!\!\!c}
\ar@{-}"c2";"d1"^(1.2){d\!\!}
\ar@{-}"c2";"d2"^(.75){e\!}
\ar@{-}"c2";"d3"_(1.2){\!\!\!f}
\endxy}
\endxy
\]
 is the composition of the three inner face maps $\partial_d$, $\partial_e$
 and $\partial_f$.
\end{ex}

\begin{prop}\label{prop:OmegaCl_adj}
The inclusion $i :  \OmegaCl \hookrightarrow \Omega$ admits the functor $\cl
: \Omega \to \OmegaCl$ as a left adjoint.
\end{prop}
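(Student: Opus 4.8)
The plan is to exhibit the inclusions $\eta^{}_T : T \hookrightarrow \cl(T)$ as the unit of the adjunction. Since $\cl(T)$ is a closed tree, each $\eta^{}_T$ may be regarded as a morphism $T \to i\bigl(\cl(T)\bigr)$ of $\Omega$, and the commutative square defining $\cl(f)$ for a morphism $f$ of $\Omega$ expresses precisely the naturality of the family $(\eta^{}_T)_T$, so that $\eta$ is a natural transformation from the identity functor of $\Omega$ to $i \circ \cl$. It therefore remains to verify the universal property of the unit: for every tree $S$, every closed tree $T$, and every morphism $f : S \to T$ of $\Omega$, there exists a unique morphism $g : \cl(S) \to T$ of $\OmegaCl$ such that $g \circ \eta^{}_S = f$, where we use that $i$ is fully faithful in order to view such a $g$ as a morphism of $\Omega$.

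For existence, I would use that a closed tree $T$ has no leaves, so that $\cl(T) = T$ and $\eta^{}_T = \id{T}$; the morphism $g := \cl(f)$ then does the job, since the defining square of $\cl(f)$ gives $\cl(f) \circ \eta^{}_S = \eta^{}_T \circ f = f$. For uniqueness, I would observe that $\eta^{}_S : S \to \cl(S)$ induces a bijection on edges (passing to the closure adds vertices but no edges), so that any morphism $g$ with $g \circ \eta^{}_S = f$ has a prescribed action on the edges of $\cl(S)$, namely the one induced by $f$; since a morphism of $\Omega$ is \emph{determined by its action on edges} — this is exactly the fact that makes $\cl$ well defined on morphisms, as recalled just before the statement — the morphism $g$ is uniquely determined. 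Finally, naturality in $S$ and $T$ of the resulting bijection $\Hom_\Omega(S, iT) \cong \Hom_{\OmegaCl}\bigl(\cl(S), T\bigr)$, $f \mapsto \cl(f)$, follows formally from the functoriality of $\cl$ and the naturality of $\eta$; one may alternatively check the triangle identities directly, which amounts to the equalities $\cl(\eta^{}_S) = \id{\cl(S)}$ and $\eta^{}_{iT} = \id{iT}$, both of which are again instances of the uniqueness observation above.

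The main point — essentially the only point — is the uniqueness half, which rests on the fact that an operad morphism $\Omega(S) \to \Omega(T)$ is completely determined by its effect on colours. This in turn holds because in the free operad on a tree there is at most one operation with a prescribed list of input colours and a prescribed output colour; a self-contained treatment would need to spell this out, but since it is precisely the fact already invoked (tacitly) in the construction of $\cl$ on morphisms, there is nothing genuinely new to establish here, and the remainder is the formal bookkeeping of an adjunction.
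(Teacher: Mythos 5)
Your proof is correct and takes essentially the same route as the paper: both exhibit $\eta$ as the unit of a reflection onto $\OmegaCl$, the paper by checking the triangle identities (which reduce to $\cl\eta = \id{\cl}$) and you by checking the equivalent universal property of the unit, with both arguments resting on the same facts that $\cl(T)=T$ for closed $T$ and that a morphism of $\Omega$ out of a closure is determined by its action on edges.
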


\begin{proof}
By the previous paragraph, we have a natural transformation $\eta :
\id{\Omega} \to i\cl$. Denote by $\eps : \cl i = \id{\OmegaCl} \to
\id{\OmegaCl}$ the identity natural transformation. We claim that $\eta$ and
$\eps$ are the unit and counit of the announced adjunction. Using the fact
that $\eps$ is the identity and that $\eta$ is the identity on $\OmegaCl$, the
triangular identities reduce to the equality $\cl\eta = \id{\cl}$; that is,
to the fact that if $T$ is a tree, we have $\cl(T \to \cl(T)) =
\id{\cl(T)}$. This is readily checked.
\end{proof}

\begin{paragr}
We now define a functor $D : \OmegaCl \to \OmegaCl$ extending the
assignment $T \mapsto D(T)$ restricted to closed trees. If $f : S
\to T$ is map in $\OmegaCl$, we define $D(f) : D(S) \to D(T)$ to be the
unique root-preserving map $D(S) \to D(T)$ extending $S \to T$, in the sense
that the diagram
\[
\xymatrix@C=2.5pc{
D(S) \ar[r]^{D(f)} & D(T) \\
S \ar[u]^{u^{}_S} \ar[r]_f & T \ar[u]_{u^{}_T}
}
\]
commutes. Since this property determines the action of $D(f)$ on the edges of
$D(S)$, there is at most one such map. Its existence follows from the
fact that for every edge~$e$ of $D(T)$ (or more generally of any closed
tree), there is a (unique) unary operation from $e$ to the root of $D(T)$ in
$\Omega(D(T))$.

One checks that this indeed defines a functor $D : \OmegaCl \to \OmegaCl$.
\end{paragr}

\begin{prop}\label{prop:dec}
The maps
\[ T \xrightarrow{u^{}_T} D(T) \xleftarrow{\overline{a^{}_T}} \overline{\eta} \]
are natural in $T$ in $\OmegaCl$.
\end{prop}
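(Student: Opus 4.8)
The plan is to treat the two legs of the span separately, as they have different natures. The leg $u^{}_T \colon T \to D(T)$ requires essentially no work: by the very definition of the functor $D$ on morphisms, for $f \colon S \to T$ in $\OmegaCl$ the map $D(f)$ was characterised as the unique root-preserving map with $D(f) \circ u^{}_S = u^{}_T \circ f$, and this equation is precisely the naturality square for $u$. So, granting the already-established functoriality of $D$, there is nothing left to check for this leg.

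For the leg $\overline{a^{}_T} \colon \overline{\eta} \to D(T)$, I would first unwind the notation: since $\overline{\eta} = \cl(\eta)$ and $D(T) = \cl(D(T))$ (because $D(T)$ is a closed tree), the map $\overline{a^{}_T}$ is the image under the functor $\cl \colon \Omega \to \OmegaCl$ of Proposition~\ref{prop:OmegaCl_adj} of the root map $a^{}_T \colon \eta \to D(T)$. Next I would establish, for $f \colon S \to T$ in $\OmegaCl$, the identity in $\Omega$
\[ D(f) \circ a^{}_S = a^{}_T \colon \eta \to D(T). \]
This is immediate: a morphism out of $\eta$ is determined by the image of the unique edge of $\eta$, and both composites send it to the root edge of $D(T)$ — the left-hand side because $a^{}_S$ hits the root of $D(S)$ and $D(f)$ is root-preserving, the right-hand side by the definition of $a^{}_T$. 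Applying the functor $\cl$ to this identity, and using that $\cl$ restricts to the identity on $\OmegaCl$ (so that $\cl(D(f)) = D(f)$; this is the fact that the counit $\cl i = \id{\OmegaCl}$ is the identity, recorded before Proposition~\ref{prop:OmegaCl_adj}), we obtain $D(f) \circ \overline{a^{}_S} = \overline{a^{}_T}$, which is exactly the naturality square for $\overline{a}$.

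I do not expect any genuine obstacle here; the argument is a matter of unwinding the definitions carefully. The only points that deserve a moment's attention are the identification $\overline{a^{}_T} = \cl(a^{}_T)$ (which uses $\cl(\eta) = \overline{\eta}$ and $\cl i = \id{\OmegaCl}$) and the remark, built into the definition of $D$, that $D(f)$ preserves the root. Should one prefer to avoid invoking $\cl$, one can argue directly: a morphism out of $\overline{\eta}$ is determined by the images of its unique edge and its unique nullary vertex; both $D(f) \circ \overline{a^{}_S}$ and $\overline{a^{}_T}$ send the edge to the root $a^{}_T$ of $D(T)$, and both send the nullary vertex to a nullary operation of $\Omega(D(T))$ with output $a^{}_T$, of which there is exactly one since $D(T)$ is closed.
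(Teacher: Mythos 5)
Your proposal is correct and follows essentially the same route as the paper: naturality of $u^{}_T$ holds by the defining property of $D(f)$, and naturality of $\overline{a^{}_T}$ reduces to the fact that $D(f)$ is root-preserving. You simply spell out in more detail (via the identification $\overline{a^{}_T} = \cl(a^{}_T)$ and the uniqueness of nullary operations in closed trees) what the paper compresses into one sentence.
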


\begin{proof}
The naturality of $u^{}_T$ is true by definition. The one of
$\overline{a^{}_T}$ boils down to the fact that for any map $f$ in
$\OmegaCl$, the map $D(f)$ is root-preserving.
\end{proof}

\begin{rem}
The diagram
\[ \id{\OmegaCl} \xrightarrow{u} D \xleftarrow{\overline{a}} \overline{\eta} \]
is a ``split décalage'' in the sense of \cite[paragraph
3.1]{CisMaltsiTheta}. This implies that $\OmegaCl$ is a (strict) test
category (see \cite[Corollary 3.7]{CisMaltsiTheta}).
\end{rem}

\begin{thm}\label{thm:Omega_asph}
The category $\Omega$ is aspherical.
\end{thm}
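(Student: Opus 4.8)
The plan is to reduce the asphericity of $\Omega$ to that of $\OmegaCl$ by means of the adjunction of Proposition~\ref{prop:OmegaCl_adj}, and then to read off the asphericity of $\OmegaCl$ from the split décalage of Proposition~\ref{prop:dec}.

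\emph{Step 1: reduction to $\OmegaCl$.} I would invoke the standard fact that a functor between small categories admitting an adjoint is a weak equivalence: if $u \dashv v$, the unit and counit are natural transformations relating the composites $vu$ and $uv$ to the respective identity functors, and a natural transformation between two functors $A \to B$ induces a homotopy between the associated maps of nerves (it is a functor out of the product of $A$ with the arrow category, whose nerve is $\Delta_1$); hence $N(u)$ and $N(v)$ are mutually inverse homotopy equivalences. Applying this to $\cl \dashv i$ of Proposition~\ref{prop:OmegaCl_adj} — where the counit $\cl\, i \to \id{\OmegaCl}$ is moreover the identity — shows that $\cl : \Omega \to \OmegaCl$ is a weak equivalence. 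Since the functor $\Omega \to \ast$ to the terminal category factors as $\Omega \xrightarrow{\ \cl\ } \OmegaCl \to \ast$, it follows that $\Omega$ is aspherical if and only if $\OmegaCl$ is.

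\emph{Step 2: $\OmegaCl$ is aspherical.} By Proposition~\ref{prop:dec}, the natural transformations
\[ \id{\OmegaCl} \xrightarrow{\ u\ } D \xleftarrow{\ \overline{a}\ } \overline{\eta}, \]
where the last term is the constant functor on $\OmegaCl$ with value the closed tree $\overline{\eta} = \cl(\eta)$, form a zigzag connecting the identity functor of $\OmegaCl$ to a constant functor. Passing to nerves turns this into a zigzag of homotopies between $\id_{N(\OmegaCl)}$ and a constant map, which forces $N(\OmegaCl)$ to be weakly contractible. Equivalently, and more quickly, the remark following Proposition~\ref{prop:dec} records that this split décalage makes $\OmegaCl$ a (strict) test category, hence in particular aspherical by Proposition~\ref{prop:test_cat_asph}. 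Combining the two steps proves the theorem.

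I do not anticipate a genuine obstacle here: all the tree combinatorics has already been spent in constructing $\cl$ and $D$ and in checking their functoriality together with the naturality statements of Propositions~\ref{prop:OmegaCl_adj} and~\ref{prop:dec}. The only points to keep an eye on are the two purely categorical inputs — that adjunctions, and zigzags of natural transformations, descend to homotopy equivalences and homotopies of nerves — and the trivial but necessary observation that $\overline{\eta}$ is an object of $\OmegaCl$, so that the relevant constant functor indeed takes values in $\OmegaCl$.
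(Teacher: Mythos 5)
Your proposal is correct and follows exactly the paper's own two-step argument: the paper's proof reads, in its entirety, that by Proposition~\ref{prop:OmegaCl_adj} the category $\Omega$ is aspherical if and only if $\OmegaCl$ is, and that the asphericity of $\OmegaCl$ follows from Proposition~\ref{prop:dec}. You merely make explicit the two standard categorical facts (adjunctions give homotopy equivalences of nerves; a zigzag of natural transformations to a constant functor gives a contraction) that the paper leaves implicit.
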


\begin{proof}
By Proposition~\ref{prop:OmegaCl_adj}, $\Omega$ is aspherical if and only if
$\OmegaCl$ is. But the asphericity of $\OmegaCl$ follows
from Proposition~\ref{prop:dec}.
\end{proof}

\section{The category $\Omega$ is a test category}

In this section, we will prove that $\Omega$ is a test category. Our proof
is based on the fact that for any tree $T$, the dendroidal set $\Delta_1
\otimes T$ is aspherical. More generally and parallel to the reasoning in
Section~\ref{sec:Delta}, we observe that for any two trees $S$ and~$T$,
their tensor product $S \otimes T$ is aspherical. To prove this,
we will need the shuffle formula introduced in \cite[Section
9]{MoerdWeissInnKan}:

\begin{prop}
Let $S$ and $T$ be two trees. The dendroidal set $S \otimes T$ can be
written as a finite union of subdendroidal sets
\[
  S \otimes T = \bigcup_{\sigma \in \Sh{S}{T}} F_\sigma
\]
satisfying the following properties:
\begin{enumerate}
  \item the $F_\sigma$'s are representable;
  \item the $F_\sigma$'s have the same root and leaves, seen as elements
    of~$(S \otimes T)(\eta)$;
  \item the $F_\sigma$'s are full subdendroidal sets of $S \otimes T$, where $X \subset
  Y$ is said to be \ndef{full} if an element of $Y(U)$, for $U$ a tree,
  belongs to $X(U)$ if and only if all its faces in $Y(\eta)$ belong to
  $X(\eta)$.
\end{enumerate}
\end{prop}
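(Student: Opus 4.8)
The plan is to recall the explicit description of the shuffles of $S$ and $T$ from \cite[Section~9]{MoerdWeissInnKan} and then to read off the three properties from that description. First I would construct the set $\Sh{S}{T}$, together with the maps $\sigma \to S \otimes T$, by a double induction on the numbers of vertices of $S$ and of $T$. If $S = \eta$ (or $T = \eta$), there is a single shuffle, namely $T$ (resp.\ $S$) with the identity map, since $\eta$ is a unit for $\otimes$. In general, write $v$ for the root vertex of $S$ with branches $S_1, \dots, S_p$ grafted on its inputs, and likewise $w$, $T_1, \dots, T_q$ for $T$. Since $\otimes$ is compatible with the grafting of trees --- which follows from the colimit formula defining $\otimes$ together with the interchange relations of the Boardman--Vogt tensor product --- a shuffle $\sigma$ is obtained either by placing a copy of $v$ (tensored with the root edge of $T$) at the root of $\sigma$ with a chosen shuffle of $S_i \otimes T$ on its $i$-th input, or symmetrically by placing a copy of $w$ at the root of $\sigma$ with a chosen shuffle of $S \otimes T_j$ on its $j$-th input; in particular $\Sh{S}{T}$ is finite. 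The same recursion produces the map $\sigma \to S \otimes T$ and the induced labelling of the edges of $\sigma$ by pairs $(e, f)$ with $e$ an edge of $S$ and $f$ an edge of $T$; an easy induction shows that this labelling is injective.

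The next step, which I expect to be the main obstacle, is the equality $S \otimes T = \bigcup_{\sigma \in \Sh{S}{T}} F_\sigma$. Since every dendroidal set is the union of the images of its non-degenerate dendrices, it suffices to show that each non-degenerate dendrex $x \colon U \to S \otimes T$ factors through some $F_\sigma$. Using that $S \otimes T = N_d(\Omega(S) \tensBV \Omega(T))$, such an $x$ is an operad morphism $\Omega(U) \to \Omega(S) \tensBV \Omega(T)$, so the claim reduces to a normal-form statement for the Boardman--Vogt tensor product of two free operads: every operation can, by repeated use of the interchange relations, be written as a composite of generating operations of the form $v \otimes f$ and $e \otimes w$ arranged along a tree that embeds into a shuffle tree, and the non-degeneracy of $x$ then ensures that $x$ is a face of such a shuffle. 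Making precise this ``percolation of the vertices of $S$ past those of $T$'' and checking that the shuffles exhaust $S \otimes T$ is the combinatorial core of the argument.

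Finally I would verify the three properties. For (a): by the previous step the map $\sigma \to S \otimes T$ is injective on edges, and a dendrex of a dendroidal set whose underlying map on edges is injective is a monomorphism (a dendrex of a tree being determined by its underlying map on edges), so $F_\sigma \cong \sigma$ is representable. For (b): the recursion above never alters the outermost edges of $\sigma$, so the root of every shuffle maps to the pair consisting of the roots of $S$ and $T$, while the leaves of every shuffle map bijectively onto the set of pairs $(\ell, \ell')$ with $\ell$ a leaf of $S$ and $\ell'$ a leaf of $T$. For (c): an edge-face $\eta \to S \otimes T$ factors through $F_\sigma$ precisely when the associated pair occurs among the (pairwise distinct) labels of the edges of $\sigma$, and it then remains to check that $\Omega(\sigma)$ is the full sub-operad of $\Omega(S) \tensBV \Omega(T)$ spanned by these colours; fullness of $F_\sigma \hookrightarrow S \otimes T$ follows, since $N_d$ carries the inclusion of a full sub-operad to a full inclusion of dendroidal sets. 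All of these verifications are routine once the recursion in the first step has been set up carefully.
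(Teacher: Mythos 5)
The paper does not actually prove this proposition: it is imported wholesale from the literature. The sentence preceding the statement and the remark following it make clear that $\Sh{S}{T}$ is the set of ``percolation schemes'' of Moerdijk--Weiss (\emph{Inner Kan complexes in the category of dendroidal sets}, Section~9), and the three listed properties are simply read off from that construction. So there is no argument in the paper to compare yours against; what you are proposing is a reconstruction of the cited result, and it has to be judged on its own completeness.

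As a reconstruction, your outline has the right shape: the recursive description of a shuffle as having either the root vertex of $S$ or the root vertex of $T$ at its root, with shuffles of the smaller tensor products grafted above, is a correct reformulation of the percolation schemes, and deducing (a) and (b) from the injectivity of the edge-labelling is the standard route. But the two claims you explicitly flag as ``to be checked'' are precisely the content of the proposition, and neither is routine. First, the exhaustion $S \otimes T = \bigcup_\sigma F_\sigma$ requires showing that every operation of $\Omega(S) \tensBV \Omega(T)$ --- an operad presented by generators and interchange relations --- admits a rewriting as a composite of generators supported on a single percolation scheme; one must control this rewriting process (termination, and the fact that it lands in one of your finitely many trees), and you do not carry this out. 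Second, fullness of $F_\sigma$ amounts to showing that any operation of $\Omega(S) \tensBV \Omega(T)$ whose input and output colours are all edge-labels of $\sigma$ already lies in $\Omega(\sigma)$; this is again a normal-form statement about the Boardman--Vogt tensor product and does not follow formally from your set-up. (Your argument for (a) also tacitly uses that $\Omega(S) \tensBV \Omega(T)$ has at most one operation with a given input/output profile, which is a fact of the same nature.) These combinatorics are genuinely delicate --- the paper itself warns that $\otimes$ is only associative up to weak equivalence --- so as it stands your proposal is an accurate plan, matching the known proof strategy, but with the essential steps left as acknowledged gaps; citing the Moerdijk--Weiss result, as the paper does, or filling in the percolation argument in detail, is what is actually required.
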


\begin{rem}
The indexing set $\Sh{S}{T}$ in the above formula is the set of $(S,
T)$\nbd-shuffles introduced in \cite{MoerdWeissInnKan} under the name of
``percolation schemes for $S$ and $T$''.
\end{rem}

\begin{prop}\label{prop:prod_trees}
If $S$ and $T$ are two trees, then the dendroidal set $S \otimes T$ is
aspherical.
\end{prop}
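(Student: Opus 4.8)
The plan is to follow the pattern of the proof of Proposition~\ref{prop:Delta_tot_asph}. By the shuffle decomposition recalled above, $S \otimes T$ is a finite non-empty union of the representable subdendroidal sets $F_\sigma$, so by Lemma~\ref{lemma:MV} it suffices to show that for every non-empty subset $J \subseteq \Sh{S}{T}$ the intersection $F_J = \bigcap_{j \in J} F_j$ is aspherical. Just as in the simplicial case, I would prove the stronger statement that $F_J$ is representable; the category of elements of a representable presheaf has a terminal object and is therefore aspherical.

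So fix $j_0 \in J$ and let $U$ denote the tree with $F_{j_0} \cong U$ (such a tree exists since the $F_\sigma$'s are representable). Since each $F_j$ is a full subdendroidal set of $S \otimes T$, it consists precisely of those elements all of whose $\eta$-faces lie in the edge set of $F_j$; hence the intersection $F_J$ is again full, and, viewed inside $F_{j_0}$, it is exactly the full subdendroidal set of $U$ on the subset $E$ of edges of $U$ common to all the $F_j$'s. Because the $F_j$'s have the same root and leaves, the set $E$ contains the root and every leaf of $U$, so every edge of $U$ not belonging to $E$ is an inner edge.

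It then remains to check that a full subdendroidal set $X$ of a tree $U$ that contains the root and all the leaves of $U$ is representable: it is the image of the composite of inner face maps $U/\{e_1,\dots,e_m\} \hookto U$, where $e_1,\dots,e_m$ are the inner edges of $U$ not in $X$. Indeed, a vertex of $U$ can appear in an operation of $\Omega(U)$ only together with all of its adjacent edges, so an operation avoiding a given inner edge $e$ avoids both end-vertices of $e$ and thus lies in the image of $\partial_e : U/e \to U$; iterating this, and using that face maps are monomorphisms, identifies the image of $U/\{e_1,\dots,e_m\} \hookto U$ with the full subdendroidal set on the complementary set of edges. Applying this to $U = F_{j_0}$ and $X = F_J$ shows that $F_J$ is representable, and Lemma~\ref{lemma:MV} concludes the proof. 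The one step that requires genuine care — more bookkeeping than real difficulty — is this last identification of full subdendroidal sets of a tree (containing root and leaves) with the subtrees obtained by contracting inner edges; the reduction via Lemma~\ref{lemma:MV}, the stability of fullness under intersection, and the asphericity of representable presheaves are all formal.
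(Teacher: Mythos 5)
Your proof is essentially identical to the paper's: reduce via Lemma~\ref{lemma:MV} to the intersections $F_J$, use fullness (preserved under intersection) to identify $F_J$ with the full subdendroidal set of $F_{j_0}$ on a set of edges containing the root and all leaves, and conclude that $F_J$ is an iterated inner face of $F_{j_0}$, hence representable and aspherical. The paper asserts this last step without further argument, and the step is correct; however, the one-sentence justification you supply for it — the step you yourself single out as requiring care — is not quite right. An element of the full subdendroidal set on an edge set $E$ is one whose $\eta$-faces lie in $E$; the operations of $\Omega(U)$ it involves are subtrees of $U$ whose \emph{root and leaves} lie in $E$, but such a subtree may perfectly well contain a missing inner edge $e$, and hence both end-vertices of $e$, as an internal edge (for instance, the maximal subtree of $U$ gives an operation whose $\eta$-faces are the root and leaves of $U$ but which contains every vertex). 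So it is false that an operation whose faces avoid $e$ avoids the end-vertices of $e$. The correct reason such an operation lies in the image of $\partial_e : U/e \to U$ is that contracting $e$ inside the corresponding subtree yields the corresponding operation of $\Omega(U/e)$; from this one sees that the image of $\partial_e$ is exactly the full subdendroidal set on $\mathrm{edges}(U)\setminus\{e\}$, and iterating over the missing inner edges gives the identification you want.
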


\begin{proof}
By Lemma~\ref{lemma:MV} applied to the formula
\[
  S \otimes T = \bigcup_{\sigma \in \Sh{S}{T}} F_\sigma,
\]
it suffices to prove that for any non-empty $J \subseteq \Sh{S}{T}$, the
dendroidal set
\[
F_J = \bigcap_{j \in J} F_j \subset S \otimes T
\]
is aspherical. We will actually show that it is representable. As all the
$F_j$'s are full subdendroidal sets of $S \otimes T$, the intersection $F_J$
is the (unique) full subdendroidal set such that $F_J(\eta) = \cap_{j \in J}
F_j(\eta)$. In particular, for any $j \in J$, $F_J$ is a full subdendroidal
set of $F_j$ containing the root and the leaves of $F_j$. This implies that
$F_J$ is an iterated inner face of $F_j$ and is hence representable.
\end{proof}

We will now see that $\Delta_1 \otimes X$ can be thought of as a cylinder
object, at least when $X$ is normal.

\begin{paragr}
  If $X$ is a dendroidal set, we have canonical maps
  \[ X \amalg X \xrightarrow{(\partial^0, \partial^1)} \Delta_1 \otimes X
  \xrightarrow{\sigma} X, \]
  factorising the codiagonal, induced by the diagram
  \[ \Delta_0 \amalg \Delta_0 = \partial{\Delta_1} \hookrightarrow \Delta_1
  \to \Delta_0, \]
  tensored by $X$.
\end{paragr}

\begin{prop}\label{prop:cylinder}
  If $X$ is a normal dendroidal set, then
  \begin{enumerate}
    \item the map $(\partial^0, \partial^1) : X \amalg X \to \Delta_1
    \otimes X$ is a normal monomorphism;
    \item the map $\sigma : \Delta_1 \otimes X \to X$ is a weak equivalence.
  \end{enumerate}
\end{prop}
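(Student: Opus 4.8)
I would prove the two assertions separately, reducing the second, by a dévissage along the skeletal filtration of $X$, to the asphericity of $S\otimes T$ for trees $S$, $T$ established in Proposition~\ref{prop:prod_trees}.

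For~(a): since $\Delta_0=\eta$ is the unit of $\otimes$ and $\otimes$ preserves colimits, the morphism $(\partial^0,\partial^1)\colon X\amalg X\to\Delta_1\otimes X$ is exactly the tensor of $X$ with the monomorphism $\partial\Delta_1=\Delta_0\amalg\Delta_0\hookto\Delta_1$, that is, the pushout--product of $\varnothing\hookto X$ with $\partial\Delta_1\hookto\Delta_1$. The first of these is a normal monomorphism because $X$ is normal, and the second is the generating normal monomorphism $\partial T\hookto T$ for $T=\Delta_1=L_1$. As the pushout--product of two normal monomorphisms is a normal monomorphism (see~\cite{CisMoerdDend}; this can also be checked directly via the shuffle decomposition), this yields~(a); in particular $\Delta_1\otimes X$ is normal, and the same reasoning shows that $\Delta_1\otimes(\partial T\hookto T)$ is a normal monomorphism for every tree $T$, which will be needed for~(b).

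For~(b): by the definition of weak equivalences of presheaves in~\ref{paragr:def_weq_psh}, a morphism $f$ of dendroidal sets is a weak equivalence if and only if $\lambda_!(f)$ is a weak homotopy equivalence of simplicial sets, and recall that $\lambda_!$ preserves colimits and monomorphisms. I would also use the elementary observation that a morphism between aspherical presheaves is a weak equivalence (the nerves of the two categories of elements are weakly contractible, and one concludes by the two-out-of-three property). If $X$ is a tree $T$, then $\Delta_1\otimes T=L_1\otimes T$ is aspherical by Proposition~\ref{prop:prod_trees} and $T$ is aspherical because $\Omega/T$ has a terminal object, so $\sigma\colon\Delta_1\otimes T\to T$ is a weak equivalence. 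For general $X$, I would prove by induction on $n$ that $\sigma\colon\Delta_1\otimes Y\to Y$ is a weak equivalence whenever $Y$ is normal and all its non-degenerate dendrices have at most $n$ vertices; the case $n=0$ reduces to a coproduct of copies of $\Delta_1\to\eta$. For the inductive step one presents $Y$ as the pushout of $\coprod_\alpha\partial T_\alpha\hookto\coprod_\alpha T_\alpha$ along $\coprod_\alpha\partial T_\alpha\to\mathrm{sk}_{n-1}Y$, where $\alpha$ runs over the $n$-vertex non-degenerate dendrices of $Y$ modulo automorphisms, which act freely since $Y$ is normal; applying $\Delta_1\otimes(-)$ gives the corresponding presentation of $\Delta_1\otimes Y$ with attaching maps that are normal monomorphisms by~(a). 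Applying $\lambda_!$ and using the naturality of $\sigma$ produces a morphism between two pushout squares of simplicial sets, each with a leg that is a cofibration, and whose remaining three vertices carry weak homotopy equivalences — by the tree case for the $T_\alpha$, and by the inductive hypothesis for $\mathrm{sk}_{n-1}Y$ and for the $\partial T_\alpha$ (which have fewer vertices), together with stability of weak homotopy equivalences under coproducts. The gluing lemma (left properness of the Kan--Quillen model structure) then gives that $\lambda_!(\sigma)$ is a weak homotopy equivalence for $Y$. Finally, any normal $X$ is the filtered colimit of its skeleta $\mathrm{sk}_n X$ along monomorphisms, whence $\lambda_!(\sigma_X)=\limind_n\lambda_!(\sigma_{\mathrm{sk}_n X})$ is a weak homotopy equivalence.

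The one genuinely substantial input is Proposition~\ref{prop:prod_trees} (and, underlying it, the shuffle formula for $S\otimes T$), which is already in hand; the rest — the pushout--product fact in~(a) and the skeletal dévissage in~(b) — is routine. I expect the skeletal induction to take up most of the write-up, the only delicate point being to keep all attaching maps monomorphisms so that left properness applies; I do not foresee a real obstacle.
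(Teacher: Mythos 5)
Your proof follows essentially the same route as the paper's: part (a) rests on the fact that tensoring a monomorphism of simplicial sets with a normal dendroidal set gives a normal monomorphism, and part (b) reduces to the representable case (Proposition~\ref{prop:prod_trees}) and then runs the standard dévissage on normal objects --- the paper simply cites \cite[Proposition~8.2.8]{Cisinski} for the skeletal induction you write out, and uses that $\lambda_!$ preserves colimits and monomorphisms and detects weak equivalences, exactly as you do. One caveat on (a): the general lemma you invoke, that the pushout--product of two normal monomorphisms of dendroidal sets is a normal monomorphism, is one of the known traps of this theory and is \emph{false} in general --- there are trees $S$ and $T$ for which the pushout--product of $\partial S \hookto S$ and $\partial T \hookto T$ fails even to be a monomorphism; this is part of what \cite{HeutsHinichMoerd} corrects in the earlier literature. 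What is true, and is all your argument actually uses (both for $(\partial^0,\partial^1)$ and for the maps $\Delta_1\otimes(\partial T_\alpha\hookto T_\alpha)$ in the inductive step), is the mixed case where one of the two maps is a monomorphism of simplicial sets; this is precisely the statement the paper cites from \cite[Section~3.4]{HeutsHinichMoerd}, so your proof is correct once the justification is phrased in that restricted form.
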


\begin{proof}
  The first assertion follows since the tensor product of a
  monomorphism of simplicial sets and a normal dendroidal set is a normal
  monomorphism (see \hbox{\cite[Section 3.4]{HeutsHinichMoerd}}).

  The second assertion is a special case of Proposition
  ~\ref{prop:prod_trees} if $X$ is representable. Since the functor $X
  \mapsto \Delta_1 \otimes X$ preserves colimits and monomorphisms between
  normal objects (see loc.~cit.), the general case follows by a standard
  induction on normal objects (see for instance
  \cite[Proposition~8.2.8]{Cisinski}), using the fact that the functor
  $\lambda_!$ of \ref{paragr:def_lambda} preserves colimits and
  monomorphisms, and detects weak equivalences.
\end{proof}

\begin{thm}
  The category $\Omega$ is a test category.
\end{thm}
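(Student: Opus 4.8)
The plan is to mimic the argument for $\Delta$ given in Section~\ref{sec:Delta}, now using $\Omega$'s asphericity together with a suitable locally aspherical separating interval. By Proposition~\ref{prop:test_cat_asph}, it suffices to check that $\Omega$ is aspherical and that it is a local test category; the first point is Theorem~\ref{thm:Omega_asph}, so only local testness remains. For this I would invoke Theorem~\ref{thm:loc_test}~(c): I must exhibit a locally aspherical separating interval in $\dSet$. The natural candidate is $I = \Delta_1$, viewed as a dendroidal set via $i_!$, with the two endpoint inclusions $\partial^0, \partial^1 : \eta = \Delta_0 \to \Delta_1$. This is a separating interval since $\Delta_0 \amalg \Delta_0 = \partial\Delta_1 \hookrightarrow \Delta_1$ is a monomorphism.

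The core of the argument is that $\Delta_1$ is locally aspherical, i.e.\ that for every representable $X = T$ the projection $\Delta_1 \otimes T \to T$ is a weak equivalence (by the remark following the definition of locally aspherical, it is enough to test on representables). But $\Delta_1 \otimes T = \Delta_1 \times T$ when we work in $\dSet$? No: the relevant ``product'' here is the tensor product $\otimes$, and indeed the separating-interval condition of Theorem~\ref{thm:loc_test} is phrased for the categorical product $X \times I$, not $X \otimes I$. The cleanest route is therefore to observe that $\sigma : \Delta_1 \otimes T \to T$ is a weak equivalence by Proposition~\ref{prop:cylinder}(b) (trees are normal), hence $\Delta_1 \otimes T$ is aspherical; then combine this with Proposition~\ref{prop:prod_trees} — which gives asphericity of $S \otimes T$ for all trees $S, T$ — to upgrade to the statement we need for the \emph{categorical} product. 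Concretely, I would check that the canonical map $X \times \Delta_1 \to X \otimes \Delta_1$ (or a comparison going the other way) behaves well enough, or, more robustly, invoke Theorem~\ref{thm:model_cat_local_test}: it suffices to produce \emph{some} model structure on $\dSet$ whose cofibrations are the monomorphisms and whose weak equivalences are the presheaf weak equivalences of~\ref{paragr:def_weq_psh}.

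That last observation suggests the approach I would actually commit to. By Proposition~\ref{prop:cylinder}, $\Delta_1 \otimes (-)$ restricted to normal dendroidal sets is a cylinder functor in Cisinski's sense for the class generated by the Segal core inclusions together with the maps $\sigma : \Delta_1 \otimes T \to T$; running Cisinski's machinery (the ``fundamental localizer'' formalism of \cite{Cisinski}) produces a combinatorial model structure on $\dSet$ with monomorphisms as cofibrations whose weak equivalences contain the $\sigma$'s, hence for which every $\Delta_1 \otimes T \to T$ — and more generally every $\Delta_1 \times X \to X$, via Proposition~\ref{prop:prod_trees} and the comparison of products — is a weak equivalence. Using Theorem~\ref{thm:model_cat_local_test} this shows $\Omega$ is a local test category, and with Theorem~\ref{thm:Omega_asph} and Proposition~\ref{prop:test_cat_asph} we conclude that $\Omega$ is a test category.

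The step I expect to be the main obstacle is precisely the mismatch between $\otimes$ and $\times$: the interval criterion of Theorem~\ref{thm:loc_test} wants the \emph{cartesian} product $X \times I$ to be aspherical, whereas all of our control — Proposition~\ref{prop:prod_trees} and Proposition~\ref{prop:cylinder} — is about the Boardman--Vogt tensor product $X \otimes I$, which is not the categorical product in $\dSet$. I would resolve this by comparing the two: there is a canonical map $X \times Y \to X \otimes Y$ (since $\otimes$ is the ``correct'' monoidal structure and $\times$ receives a comparison), and for $Y = \Delta_1$ one can analyze this map directly on representables, or, alternatively, bypass it entirely by the Theorem~\ref{thm:model_cat_local_test} route sketched above, which only requires identifying the weak equivalences abstractly and never mentions $\times$. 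Getting the bookkeeping of this comparison exactly right — or verifying the cylinder-functor axioms needed to feed Cisinski's theorem — is where the real work lies; everything else is formal.
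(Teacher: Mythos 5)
You correctly reduce the problem to local testness via Proposition~\ref{prop:test_cat_asph} and Theorem~\ref{thm:Omega_asph}, and you correctly identify the central obstacle: the interval criterion of Theorem~\ref{thm:loc_test} concerns the cartesian product $T \times I$, while Propositions~\ref{prop:prod_trees} and~\ref{prop:cylinder} only control the tensor product $T \otimes \Delta_1$. But you do not resolve this obstacle; both of your proposed resolutions leave the essential step unproved. The ``comparison map'' route is not carried out --- no map between $T \times \Delta_1$ and $T \otimes \Delta_1$ is exhibited or analysed, and it is not clear one exists in a useful form. The route through Theorem~\ref{thm:model_cat_local_test} is essentially circular: Cisinski's machinery applied to the cylinder $\Delta_1 \otimes (-)$ does produce a model structure with monomorphisms as cofibrations, but its weak equivalences are by construction the smallest localiser containing the chosen anodyne data, whereas Theorem~\ref{thm:model_cat_local_test} requires them to be \emph{exactly} the weak equivalences of~\ref{paragr:def_weq_psh}; proving that coincidence is precisely the content of local testness and is not supplied by the construction.

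The paper closes the gap with a different choice of interval and a short trick you are missing. One takes the Lawvere object $L_\Omega$ (not $\Delta_1$) as the separating interval. Since $L_\Omega$ is an injective object of the topos $\dSet$, the projection $p : L_\Omega \times T \to T$ has the right lifting property with respect to all monomorphisms, and its source is normal because its target $T$ is. Any map $p : X \to Y$ with these two properties is then shown to be a weak equivalence by a purely formal homotopy argument in which the tensor product appears only as a cylinder: $p$ admits a section $s$, and lifting $(1_X, sp)$ against the normal monomorphism $(\partial^0,\partial^1) : X \amalg X \to \Delta_1 \otimes X$ produces a homotopy $h : \Delta_1 \otimes X \to X$ from $1_X$ to $sp$; since $\sigma : \Delta_1 \otimes X \to X$ is a weak equivalence by Proposition~\ref{prop:cylinder} (which is where Proposition~\ref{prop:prod_trees} and the shuffles actually enter), $sp$ is a weak equivalence, hence so is $p$. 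This never requires comparing $\times$ with $\otimes$. Your write-up assembles all the ingredients of this argument but not the argument itself.
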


\begin{proof}
 Since $\Omega$ is aspherical (Theorem~\ref{thm:Omega_asph}), by
 Theorem~\ref{thm:loc_test} it suffices to show that the Lawvere interval
 $L_\Omega$ is locally aspherical; that is, that for every
 tree~$T$, the projection $p : L_\Omega \times T \to T$ is a weak
 equivalence. The map $p$ satisfies the two following properties: first, it
 has the right lifting property with respect to monomorphisms (as $L_\Omega$
 is injective); second, its source is normal (as its target is). The
 following standard argument shows that any map $p : X \to Y$ satisfying
 these two conditions is a weak equivalence. Note first that such a map
 admits a section~$s$. Consider now the commutative square
  \[
  \xymatrix{
    X \amalg X
    \ar[rr]^-{(\id{}, sp)}
    \ar[d]_{(\partial^0, \partial^1)} &&
    X
    \ar[d]^p \\
    \Delta_1 \otimes X \ar[r]_\sigma &
    X \ar[r]_p
    \ar[r] &
    Y \pbox{.}
  }
  \]
  Since by the previous proposition $(\partial^0, \partial^1)$ is a
  monomorphism, this square admits a lifting $h : \Delta_1 \otimes X \to X$
  defining a homotopy from $1$ to $sp$ in
  an obvious sense. As $\sigma : \Delta_1 \otimes X \to X$ is a weak
  equivalence (again by the previous proposition), this implies that $sp$ is
  a weak equivalence and so $p$ has a left as well as a right inverse in the
  homotopy category, hence is a weak equivalence, thereby proving
  the result.
\end{proof}

\begin{rem}
  The category $\Omega$ is not a \ndef{strict} test category (see
  \cite[Section~1.6]{Maltsi}) as the category $\Omega/(\eta \times C_2)
  \simeq \Delta/C_2$, where $C_2$ is the $2$-corolla, is not aspherical (it
  is not even connected).
\end{rem}

\begin{rem}
  The planar variation $\OmegaP$ of $\Omega$, defined by replacing symmetric
  operads by non-symmetric operads, is also a test category. Indeed,
  $\OmegaP$ is equivalent to the slice category of $\Omega$ over the
  presheaf of planar structures and by~\cite[Remark~1.5.4]{Maltsi}, it
  suffices to observe that the same argument as in
  Section~\ref{sec:Omega_asph} shows that $\OmegaP$ is aspherical.
  Similarly, the category $\OmegaPCl$ of closed planar trees is a strict
  test category.
\end{rem}

\begin{coro} \label{coro:test_model}
  There exists a proper combinatorial model category structure on~$\dSet$
  whose cofibrations are the monomorphisms and whose weak equivalences are
  the weak equivalences defined in \ref{paragr:def_weq_psh}. Moreover, the
  functor $\lambda_! : \dSet \to \pref{\Delta}$ of~\ref{paragr:def_lambda}
  is a left Quillen equivalence.
\end{coro}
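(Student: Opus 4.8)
The plan is to assemble the statement from the theorem just proved, namely that $\Omega$ is a test category, together with the general results recalled in Section~1. First, since a test category is in particular a local test category, Theorem~\ref{thm:model_cat_local_test} applies to $A = \Omega$ and provides a combinatorial and proper model category structure on $\dSet$ whose cofibrations are the monomorphisms and whose weak equivalences are the weak equivalences of presheaves defined in~\ref{paragr:def_weq_psh}. This already yields the first sentence of the corollary.

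Next, I would invoke Theorem~\ref{thm:test_model}: since $\Omega$ is a test category, there is a model category structure on $\dSet$ with monomorphisms as cofibrations for which the adjunction $\lambda_! : \dSet \rightleftarrows \pref{\Delta} : \lambda^*$ is a Quillen equivalence, the right-hand side being equipped with the Kan--Quillen structure. By the Remark following Theorem~\ref{thm:test_model} (or, concretely, because in this structure every object is cofibrant, so $\lambda_!$ detects weak equivalences, which forces them to coincide with the presheaf weak equivalences of~\ref{paragr:def_weq_psh}), this model structure is the same as the one produced above. Hence that single model structure is combinatorial and proper, has the monomorphisms as cofibrations, has the weak equivalences of~\ref{paragr:def_weq_psh} as weak equivalences, and makes $\lambda_!$ a left Quillen equivalence.

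I do not expect any real obstacle here: all the substance lies in the preceding theorem and in the cited general theory of test categories, so the corollary is just an assembly of those facts. The only point deserving a word of care is the identification of the two model structures, but this is immediate once one observes that they share both their class of cofibrations and their class of weak equivalences.
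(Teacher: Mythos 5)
Your proposal is correct and follows exactly the paper's own (one-line) proof: apply Theorem~\ref{thm:model_cat_local_test} for the existence, properness and combinatoriality of the structure, and Theorem~\ref{thm:test_model} (together with the uniqueness noted in the remark following it) for the Quillen equivalence. The identification of the two model structures via their shared cofibrations and weak equivalences is the right and only point needing care, and you handle it correctly.
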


\begin{proof}
  This follows from the previous theorem using
  Theorems~\ref{thm:model_cat_local_test} and~\ref{thm:test_model}.
\end{proof}

The model category structure of the previous corollary will be called the
\ndef{test model category structure}. From now on, its weak equivalences
will be called \ndef{test weak equivalences} to distinguish them from
operadic weak equivalences.

\section{Comparison with the operadic model category structure}

In this last section, we compare the test model category structure on
$\dSet$ with the operadic model category structure.

\begin{prop}\label{prop:norm_fib}
  Every map of $\pref{\Omega}$ having the right lifting property with respect
  to normal monomorphisms is a test weak equivalence.
\end{prop}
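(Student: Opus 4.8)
The plan is to reduce the statement to a comparison between two model structures on $\dSet$ that share the same cofibrations, namely the test model structure of Corollary~\ref{coro:test_model} and the operadic model structure. The key observation is that the maps having the right lifting property with respect to normal monomorphisms are precisely the trivial fibrations of the operadic model structure (since, in that structure, cofibrations are the normal monomorphisms). So the proposition amounts to saying that every operadic trivial fibration is a test weak equivalence. Phrased yet another way: the identity functor $\dSet_{\mathrm{oper}} \to \dSet_{\mathrm{test}}$, which sends normal monomorphisms to monomorphisms, should send operadic trivial cofibrations — or at least trivial fibrations — to test weak equivalences.

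First I would invoke Theorem~\ref{thm:Segal}: the class $\W$ of operadic weak equivalences is the \emph{smallest} class satisfying 2-out-of-3, containing all maps with the right lifting property against normal monomorphisms, closed under pushout, transfinite composition and retracts among normal monomorphisms, and containing the Segal core inclusions. I would like to run an analogous argument for the class of test weak equivalences, but note the asymmetry: the test model structure has \emph{all} monomorphisms as cofibrations, so its class of weak equivalences automatically satisfies 2-out-of-3, contains all maps with the right lifting property against monomorphisms (hence against normal monomorphisms), and is closed under pushout/transfinite composition/retracts among monomorphisms (hence among normal monomorphisms). Thus the class of test weak equivalences satisfies conditions~(a), (b), (c) of Theorem~\ref{thm:Segal} with ``normal monomorphism'' in~(b) and~(c). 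If I can additionally show that every Segal core inclusion $\SC{T} \hookto T$ is a test weak equivalence, then by the minimality clause of Theorem~\ref{thm:Segal} every operadic weak equivalence is a test weak equivalence; in particular every map with the right lifting property against normal monomorphisms is, which is exactly the proposition (in fact a stronger statement).

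So the crux reduces to: \emph{every Segal core inclusion $\SC{T}\hookto T$ is a test weak equivalence.} For this I would argue that $\SC{T}$ is aspherical, so that both $\SC{T}$ and $T$ map by weak equivalences to $\eta$ (a point), and conclude by 2-out-of-3. The tree $T$ is aspherical because it is representable with $\Omega$ aspherical — more precisely, $\Omega/T$ has a terminal object, namely $\mathrm{id}_T$, so it is aspherical. For $\SC{T}$: it is built as an iterated pushout of corollas $C_n$ glued along copies of $\eta$ following the shape of $T$ — concretely a colimit over the category of elements of $\SC{T}$, which can be described as the poset of inner edges and vertices of $T$. Each $C_n$ is aspherical (representable), each $\eta$ is aspherical, and the gluing poset is a tree-shaped diagram; I would show asphericity either by a direct Mayer–Vietoris induction using Lemma~\ref{lemma:MV} along the vertices of $T$ (successively adjoining corollas, with intersections being single edges $\eta$), or by identifying $\lambda_!(\SC{T})$ with the nerve of the connected poset of elements and invoking that $\lambda_!$ detects weak equivalences. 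The main obstacle I anticipate is this last point: carefully checking that the intersections appearing when one writes $\SC{T}$ as a union of its corolla-images are each a single edge $\eta$ (hence representable and aspherical), so that Lemma~\ref{lemma:MV} applies cleanly; this is a combinatorial fact about how the corollas of a tree overlap, intuitively clear but requiring a short verification that two distinct vertex-corollas of $T$ share at most one edge and three or more share none.
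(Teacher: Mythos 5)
There is a genuine gap, and it is a circularity. Your plan is to verify that the class of test weak equivalences satisfies conditions (a)--(d) of Theorem~\ref{thm:Segal} and then invoke minimality. But condition (b) asks that the class contain \emph{every map having the right lifting property with respect to normal monomorphisms} --- which is word for word the statement of Proposition~\ref{prop:norm_fib} itself. Your justification of (b), namely that the test weak equivalences ``contain all maps with the right lifting property against monomorphisms (hence against normal monomorphisms)'', gets the direction of the containment wrong: since normal monomorphisms form a \emph{subclass} of the monomorphisms, having the right lifting property against normal monomorphisms is a \emph{weaker} condition than having it against all monomorphisms, so the class of such maps is \emph{larger} than the class of test trivial fibrations, and there is no reason a priori that its extra members are test weak equivalences. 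That extra content is exactly what the proposition asserts, so your argument assumes what it is supposed to prove.

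The paper's proof supplies the missing ingredient directly: given $p : X \to Y$ with the right lifting property against normal monomorphisms, one applies Quillen's Theorem A to reduce to the base changes $q = p \times_Y T : X \times_Y T \to T$ over representables $T \to Y$; since $T$ is a normal dendroidal set, every monomorphism $A \to B$ occurring in a lifting square against $q$ has normal target $B$ and is therefore automatically normal, so $q$ has the right lifting property against \emph{all} monomorphisms and is a trivial fibration of the test model structure. The rest of your outline (conditions (a), (c), and the asphericity of Segal cores via Lemma~\ref{lemma:MV}) is sound and in fact reproduces the paper's later proposition that every operadic weak equivalence is a test weak equivalence --- but in the paper that argument comes \emph{after} Proposition~\ref{prop:norm_fib} and Theorem~\ref{thm:norm_mod_struct}, precisely because condition (b) is only available once the present proposition has been established.
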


\begin{proof}
  Let $p : X \to Y$ be such a map. By Quillen's Theorem A, to prove that $p$
  is a test weak equivalence, it suffices to check that for any tree $T$ and
  any map $T \to Y$, the map $q = p \times_Y T : X \times_Y T \to T$ is
  a test weak equivalence. But this map $q$ has the same lifting property as
  $p$ and, if
  \[
  \xymatrix{
    A \ar[r] \ar[d]_i \ar[r] & X \times_Y T \ar[d]^q \\
    B \ar[r] & T
  }
  \]
  is a commutative square where $i$ is a monomorphism, then $i$ is
  automatically normal as $T$ and hence $B$ are normal. In particular, $q$ is
  a trivial fibration in the test model category structure and hence a test
  weak equivalence, thereby proving the result.
\end{proof}

\begin{thm}\label{thm:norm_mod_struct}
  There exists a proper model category structure on~$\pref{\Omega}$ whose
  cofibrations are the normal monomorphisms and whose weak equivalences are
  the test weak equivalences.
\end{thm}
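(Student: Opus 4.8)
The plan is to obtain this model structure as a transfer/comparison from the two model structures we already have on $\dSet$: the test model category structure (Corollary~\ref{coro:test_model}), whose cofibrations are all monomorphisms and whose weak equivalences are the test weak equivalences, and the operadic model category structure, whose cofibrations are the normal monomorphisms. The desired structure has the same weak equivalences as the first and the same cofibrations as the second, so it should be thought of as a ``Bousfield-type'' mixing of the two; since every normal monomorphism is a monomorphism and every trivial fibration of the test structure has the right lifting property with respect to normal monomorphisms, the two structures are nicely nested, and the candidate structure is the one with fibrations the maps having the right lifting property with respect to normal monomorphisms that are test weak equivalences.

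\medbreak

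First I would set up the candidate classes: cofibrations $=$ normal monomorphisms, weak equivalences $=$ test weak equivalences, fibrations $=$ maps with the right lifting property with respect to normal monomorphisms that are also test weak equivalences. The key external input is Proposition~\ref{prop:norm_fib}: every map with the right lifting property with respect to normal monomorphisms is already a test weak equivalence. This means that the class of (normal-mono)-trivial-fibrations-in-the-candidate-sense coincides with the class of all maps having the right lifting property with respect to normal monomorphisms, which is exactly the class of trivial fibrations of the operadic model structure. Hence one lifting/factorisation axiom is free: factor $f$ as a normal monomorphism followed by a map with the right lifting property with respect to normal monomorphisms (operadic factorisation); the second map is a trivial fibration in the candidate sense by Proposition~\ref{prop:norm_fib}. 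The retract axiom, the 2-out-of-3 property for test weak equivalences, and closure of cofibrations under the usual operations are immediate from the corresponding facts for monomorphisms and for test weak equivalences.

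\medbreak

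For the remaining factorisation — a trivial cofibration (normal mono $\cap$ test weak equivalence) followed by a fibration — I would invoke the small object argument with respect to a suitable set of normal monomorphisms that are test weak equivalences, using that $\dSet$ is locally presentable and that normal monomorphisms are generated as a saturated class by the set $\{\partial T \hookto T\}$. The cleanest route is to apply Cisinski's machinery for constructing model structures by localisation on a presheaf topos: we have a cofibrantly generated structure (the operadic one) with cofibrations the normal monomorphisms, and we want to keep these cofibrations while enlarging the weak equivalences to the test weak equivalences; this is precisely a left Bousfield localisation in the sense of \cite[Chapter~4]{Cisinski}, whose existence requires an accessibility/solution-set condition on the class of test weak equivalences (automatic, as this class is accessible, being the weak equivalences of the combinatorial test model structure) together with the fact that test weak equivalences satisfy 2-out-of-3 and are closed under the relevant operations among normal monomorphisms. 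The last lifting axiom (a fibration that is a test weak equivalence has the right lifting property with respect to normal monomorphisms) then follows from the characterisation of fibrations in such a localised structure together with Proposition~\ref{prop:norm_fib}. Properness: right properness follows from right properness of the operadic structure (fibrations of the new structure are operadic fibrations) and the fact that test weak equivalences are stable under the relevant pullbacks, which in turn reduces to right properness of the test model structure via Corollary~\ref{coro:test_model}; left properness is automatic since every object is cofibrant in the test structure, and test weak equivalences are stable under pushout along monomorphisms (hence along normal monomorphisms).

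\medbreak

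The main obstacle I expect is pinning down the trivial-cofibration/fibration factorisation and the accompanying characterisation of fibrations: one must check that the generating trivial cofibrations can be taken to be normal monomorphisms (not merely monomorphisms) and that the resulting fibrant-objects / lifting description is consistent, i.e.\ that a map with the right lifting property against this generating set and which is a test weak equivalence actually has the right lifting property against \emph{all} normal monomorphisms that are test weak equivalences. This is where Proposition~\ref{prop:norm_fib} does the real work, by collapsing ``trivial fibration in the new sense'' to ``map with RLP against normal monos'', so I would structure the write-up around that identification and then quote Cisinski's localisation theorem to package the rest.
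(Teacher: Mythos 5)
Your proposal is essentially sound and shares the two decisive ingredients with the paper's argument: Proposition~\ref{prop:norm_fib} (which collapses ``trivial fibration in the new structure'' to ``right lifting property against all normal monomorphisms'') and the cofibration/trivial-fibration factorisation obtained from the small object argument applied to $\{\partial T \hookto T\}$. Where you diverge is the second factorisation. You propose to produce the (normal mono $\cap$ test weak equivalence, fibration) factorisation by Smith/Cisinski-type machinery, i.e.\ by exhibiting a generating \emph{set} of trivial cofibrations via accessibility of the class of test weak equivalences. The paper avoids this entirely with an elementary composite trick: factor $f = pi$ in the \emph{test} model structure ($i$ a mono and a weak equivalence, $p$ a fibration), then refactor $i = qj$ using the first factorisation ($j$ a normal mono, $q$ with RLP against normal monos, hence a weak equivalence by Proposition~\ref{prop:norm_fib}); 2-out-of-3 makes $j$ a trivial cofibration in the new sense, and $pq$ lies in the new fibration class because the new fibrations contain the old ones. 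This is packaged as a general lemma about shrinking the cofibrations of an existing model category to a saturated subclass $\Cof'$ generated by a set, under the single hypothesis that RLP against $\Cof'$ implies weak equivalence. The paper's route buys you a completely elementary proof (no accessibility of the weak equivalences is needed, only the already-established test model structure), plus a reusable general statement; your route is heavier but would also identify the structure as combinatorial. The remaining lifting axiom is handled identically in both approaches (retract argument), and properness is immediate in both since it depends only on the weak equivalences.

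One caveat about your framing: you describe the construction as a left Bousfield localisation of the \emph{operadic} model structure, which presupposes that operadic weak equivalences are contained in test weak equivalences. In the paper that containment is proved \emph{after}, and using, Theorem~\ref{thm:norm_mod_struct}, so relying on it here would be circular. This is repairable --- apply the Smith-type recognition theorem directly to the pair (normal monomorphisms, test weak equivalences) without reference to the operadic weak equivalences --- but as written your appeal to ``enlarging the weak equivalences'' of the operadic structure is not yet justified at this point in the development. Similarly, your right-properness argument via ``fibrations of the new structure are operadic fibrations'' leans on the same unproved containment; it is cleaner to observe, as the paper does, that properness depends only on the class of weak equivalences and hence transfers from the test model structure of Corollary~\ref{coro:test_model}.
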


This model category structure on $\dSet$ will be called the \ndef{normal
test model category structure}.

\begin{proof}
  The existence of a model category structure with the announced weak
  equivalences and cofibrations is a consequence of the following lemma
  applied to the test model category structure, the hypothesis of the lemma
  being satisfied by the previous proposition. As properness only depends on
  the weak equivalences, the properness of the resulting model category
  structure follows from the properness of the test model category
  structure.
\end{proof}

\begin{lemma}
  Let $\M$ be a model category. Suppose $\Cof'$ is a class of cofibrations,
  which is the saturation of a set (rather than a class) of morphisms
  allowing the small object argument, and has the property that any map
  having the right lifting property with respect to $\Cof'$ is a weak
  equivalence. Then there exists a model category structure on $\M$ with the
  same weak equivalences, and $\Cof'$ as class of cofibrations.
\end{lemma}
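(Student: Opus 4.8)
The plan is to keep the weak equivalences of $\M$, which I will denote by $\W$, take $\Cof'$ as the class of cofibrations, and take as fibrations the class $\Fib'$ of all maps having the right lifting property with respect to $\Cof'\cap\W$. For a class $\mathcal S$ of maps, write $\ell(\mathcal S)$, resp.\ $r(\mathcal S)$, for the class of maps having the left, resp.\ right, lifting property with respect to $\mathcal S$. Several of the axioms are then immediate: $\W$ inherits the $2$-out-of-$3$ property and closure under retracts from $\M$; the class $\Fib' = r(\Cof'\cap\W)$ is closed under retracts and under composition; and $\Cof'$, being the saturation of a set admitting the small object argument, equals $\ell(r(\Cof'))$ and is in particular closed under retracts. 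So it remains to produce the two weak factorisation systems $\bigl(\Cof'\cap\W,\,\Fib'\bigr)$ and $\bigl(\Cof',\,\Fib'\cap\W\bigr)$.

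The first key step is the identity $\Fib'\cap\W = r(\Cof')$. The inclusion $r(\Cof')\subseteq\Fib'\cap\W$ holds because maps in $r(\Cof')$ lift against $\Cof'\cap\W\subseteq\Cof'$ and are weak equivalences by the hypothesis of the lemma. For the converse, given $p\in\Fib'\cap\W$, the small object argument applied to the generating set factorises $p$ as $q\circ i$ with $i\in\Cof'$ and $q\in r(\Cof')$; by the hypothesis $q\in\W$, so $i\in\W$ by $2$-out-of-$3$, hence $i\in\Cof'\cap\W$, so $p$ lifts against $i$, and the usual retract argument realises $p$ as a retract of $q$, whence $p\in r(\Cof')$. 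Granting this, the weak factorisation system $\bigl(\Cof',\,\Fib'\cap\W\bigr) = \bigl(\Cof',\,r(\Cof')\bigr)$ comes for free: the factorisations are produced by the small object argument, and the lifting properties hold because $\Cof' = \ell(r(\Cof'))$.

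The remaining, and main, point is to factorise an arbitrary map $f$ as a map in $\Cof'\cap\W$ followed by a map in $\Fib'$; the obstacle is that the new structure has no evident small generating set of trivial cofibrations, so one cannot just invoke the small object argument. Instead I would borrow a factorisation from $\M$: write $f = p'\circ j'$ with $j'$ a trivial cofibration and $p'$ a fibration of $\M$. Since the elements of $\Cof'$ are by hypothesis cofibrations of $\M$, every map in $\Cof'\cap\W$ is a trivial cofibration of $\M$, and therefore $p'\in\Fib'$. Now apply the small object argument to $j'$ to get $j' = q\circ i$ with $i\in\Cof'$ and $q\in r(\Cof') = \Fib'\cap\W$; since $j'$ and $q$ lie in $\W$, so does $i$, giving $i\in\Cof'\cap\W$, while $p'\circ q$ lies in $\Fib'$ as a composite of maps in $\Fib'$. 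Hence $f = (p'\circ q)\circ i$ is the desired factorisation. The two lifting properties of the system $\bigl(\Cof'\cap\W,\,\Fib'\bigr)$ are then routine: one holds by the definition of $\Fib'$, and for the other, if $f\in\ell(\Fib')$ then $f\in\ell(r(\Cof')) = \Cof'$ because $r(\Cof')\subseteq\Fib'$, and $f\in\W$ because the lift of $f$ against the $\Fib'$-part of its own factorisation realises $f$ as a retract of a map in $\Cof'\cap\W$ and $\W$ is closed under retracts. Functoriality of the factorisations is inherited from the small object argument and from $\M$, and properness is not part of this lemma, so the argument stops here.
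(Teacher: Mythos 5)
Your proof is correct and takes essentially the same route as the paper: the same definition of $\Fib'$ as $r(\Cof'\cap\W)$, the small object argument for the $(\Cof',\,\Fib'\cap\W)$ factorisation, the trick of borrowing $\M$'s (trivial cofibration, fibration) factorisation and refactoring the trivial cofibration to get the $(\Cof'\cap\W,\,\Fib')$ factorisation via $\Fib\subseteq\Fib'$, and the retract argument for the remaining lifting property. Your reorganisation around the identity $\Fib'\cap\W=r(\Cof')$ is a clean way to package what the paper spreads over its factorisation and lifting paragraphs, but the substance is identical.
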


\begin{proof}
  This lemma is probably well-known to experts, but we include a proof for
  completeness. Let us denote by $\W$, $\Cof$ and $\Fib$ the classes of weak
  equivalences, cofibrations and fibrations of $\M$, and by $\Fib'$ the
  class of maps having the right lifting property with
  respect to $\Cof' \cap \W$. Let us show that $(\M, \W, \Cof', \Fib')$ is a
  model category. The only axioms that are not obviously true are the
  lifting axiom and the factorisation axiom.

  Let us start by the factorisation axiom. By the small object argument,
  every map~$f$ of $\M$ factors as $f = pi$, where $i$ is in $\Cof'$ and $p$
  has the right lifting property with respect to $\Cof'$. By hypothesis,
  such a $p$ is in $\W$. This shows that $p$ is in~$\Fib' \cap \W$.
  As for the second factorisation, if $f$ is map of $\M$, we can write
  $f = pi$, where $i$ is in $\Cof \cap \W$ and $p$ is in $\Fib$. Using the
  previous factorisation, we get $i = qj$, where $j$ is in $\Cof'$ and $q$
  is in $\Fib' \cap \W$. As $\Cof' \subset \Cof$, we have $\Fib \subset
  \Fib'$, showing that $pq$ is in $\Fib'$, so that $f = (pq)j$ is a
  factorisation of the desired kind.

  To conclude the proof, we observe that one half of the lifting axiom holds
  by definition, while the other half follows by the following standard
  retract argument. Consider a map $f$ in~$\Fib' \cap \W$. We can factor $f$ as
  $pi$ with $i$ in $\Cof'$ and $p$ having the right lifting property with
  respect to $\Cof'$ and hence being in $\W$. By the 2-out-of-3 property,
  $i$~is in $\W$, and hence in $\Cof' \cap \W$. This implies that $f$ has
  the right lifting property with respect to $i$ and hence, by the retract
  lemma, that $f$ is a retract of~$p$, and so that it has the right lifting
  property with respect to the class $\Cof'$.
\end{proof}


\begin{prop}
  Every operadic weak equivalence is a test weak equivalence.
\end{prop}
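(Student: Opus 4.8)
The plan is to show that the class of test weak equivalences satisfies the four characterising properties of operadic weak equivalences listed in Theorem~\ref{thm:Segal}, and then conclude by minimality. Write $\W_{\mathrm{test}}$ for the class of test weak equivalences. Since the operadic weak equivalences form the \emph{smallest} class $\W$ satisfying those four properties, it suffices to verify that $\W_{\mathrm{test}}$ is such a class; the inclusion $\W_{\mathrm{oper}} \subseteq \W_{\mathrm{test}}$ follows immediately.

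First I would check the four conditions. Condition~(a), the 2-out-of-3 property, holds for $\W_{\mathrm{test}}$ because these are the weak equivalences of a model category structure (Corollary~\ref{coro:test_model}). Condition~(b), that $\W_{\mathrm{test}}$ contains every map with the right lifting property with respect to normal monomorphisms, is exactly Proposition~\ref{prop:norm_fib}. Condition~(c), that normal monomorphisms lying in $\W_{\mathrm{test}}$ are closed under pushout, transfinite composition and retracts: here one uses that normal monomorphisms are in particular monomorphisms, hence cofibrations in the test model category structure, so normal trivial cofibrations are trivial cofibrations in that structure, and trivial cofibrations in any model category are stable under pushout, transfinite composition and retracts. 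Condition~(d), that $\W_{\mathrm{test}}$ contains the Segal core inclusions $\SC{T} \hookto T$: this is the substantive point and the main obstacle.

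For condition~(d), the strategy is to reduce the asphericity of the inclusion $\SC{T} \hookto T$ to Lemma~\ref{lemma:MV} together with Proposition~\ref{prop:prod_trees}-style representability arguments, or more directly to observe that a Segal core inclusion is a test weak equivalence by an inductive dévissage on the number of vertices of $T$. The Segal core $\SC{T}$ is a union, over the vertices $v$ of $T$, of the corollas $C_{n(v)} \hookto T$, glued along the inner edges; each $C_{n(v)}$ is representable hence aspherical, each nonempty intersection of such corollas is either empty, an $\eta$, or again of this shape, and in the relevant cases is representable. I would apply Lemma~\ref{lemma:MV} to express $\SC{T}$ as a finite union of aspherical subpresheaves with aspherical intersections, concluding that $\SC{T}$ is aspherical; since $T$ is representable hence aspherical, the inclusion $\SC{T} \hookto T$ is then a map between aspherical dendroidal sets. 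To upgrade "both aspherical" to "weak equivalence between them" one must be slightly careful: a map between aspherical objects need not be a weak equivalence in general, but here one can argue that $\SC{T} \hookto T$ is a monomorphism and invoke that the test model structure has the monomorphisms as cofibrations together with the fact that $N(\Omega/\SC{T}) \to N(\Omega/T)$ is a weak homotopy equivalence because both slice categories are aspherical \emph{and} the functor is a cofinal/aspherical functor in the sense of Quillen's Theorem A — one checks the relevant fibres are aspherical. Alternatively, and more cleanly, one notes that $\SC{T} \hookto T$ is built from $T$ by removing non-representable faces and is an iterated pushout of inner horn inclusions along normal monomorphisms, so it lies in the saturation appearing in condition~(c) applied to the smaller generating observation — but since we are allowed to \emph{assume} condition~(d)'s content is what we must prove, the honest route is the Theorem~A computation of the fibres of $\Omega/\SC{T} \to \Omega/T$.

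The hard part will be this last verification that the Segal core inclusion is genuinely a test weak equivalence rather than merely a map of aspherical objects; the cleanest execution uses Quillen's Theorem~A to check that for every tree $U$ over $T$, the comma category $(\Omega/\SC{T}) /U$ is aspherical, which amounts to showing that the poset of ways to factor a map $U \to T$ through the Segal core has contractible nerve — and this follows because that poset has an initial or terminal object, or decomposes as a product of such, obtained by pushing each factorisation to its "Segal decomposition". Once condition~(d) is in hand, the conclusion $\W_{\mathrm{oper}} \subseteq \W_{\mathrm{test}}$ is immediate from Theorem~\ref{thm:Segal}.
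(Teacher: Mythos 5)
Your overall strategy is exactly the paper's: verify that the test weak equivalences satisfy the four conditions of Theorem~\ref{thm:Segal} and conclude by minimality, with conditions (a)--(c) coming from the model category structures already established and (d), the Segal core inclusions, being the substantive point. Your verifications of (a)--(c) are fine (the paper gets all three at once from Theorem~\ref{thm:norm_mod_struct}, whose cofibrations are precisely the normal monomorphisms, which is marginally cleaner than routing (c) through the test model structure plus saturation of normal monomorphisms, but your version works).

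The problem is in your treatment of (d): the ``hard part'' you identify is a phantom, and the argument you propose for it is left incomplete. In this framework a presheaf $X$ is aspherical precisely when $N(\Omega/X) \to N(\Omega/\ast)$ is a weak homotopy equivalence, so if $\SC{T}$ and $T$ are both aspherical then $N(\Omega/\SC{T}) \to N(\Omega/T)$ is a weak homotopy equivalence by two-out-of-three for weak homotopy equivalences of simplicial sets --- no cofinality argument, no Theorem~A, no computation of comma categories. Your claim that ``a map between aspherical objects need not be a weak equivalence in general'' is false for this notion of weak equivalence; you are probably conflating it with the genuinely stronger notion of an \emph{aspherical functor} (one all of whose comma categories are aspherical), which indeed does not follow from asphericity of source and target but is not what is needed here. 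As written your proof does not close, because the Theorem~A verification you defer to is only gestured at (``the poset has an initial or terminal object, or decomposes as a product of such'') and you never carry it out; fortunately it is unnecessary. One further small correction: you cannot apply Lemma~\ref{lemma:MV} to the union of \emph{all} the corollas of $\SC{T}$ at once, since the intersection of the corollas at two non-adjacent vertices is empty, hence not aspherical, violating the hypothesis of the lemma. The correct d\'evissage --- and the one the paper intends by ``iteratively gluing corollas along $\eta$'' --- is to add the corollas one at a time, the new corolla meeting the union built so far in a single copy of $\eta$, and to apply the binary case of Lemma~\ref{lemma:MV} at each step.
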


\begin{proof}
  Theorem~\ref{thm:norm_mod_struct} implies that the class of test weak
  equivalences satisfies the first three conditions of
  Theorem~\ref{thm:Segal}. This shows that the assertion is equivalent to
  the fact that Segal core inclusions are test weak equivalences. It thus
  suffices to prove that for any tree $T$, the Segal core of $T$ is
  aspherical. This follows from the fact that the Segal core of $T$
  can be constructed by iteratively gluing corollas along~$\eta$.
\end{proof}

\begin{thm}
  The normal test model category structure on $\pref{\Omega}$ is the left
  Bousfield localisation of the operadic model category structure by the set
  of maps between representable dendroidal sets.
\end{thm}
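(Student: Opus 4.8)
The plan is to identify the normal test model structure on $\pref{\Omega}$ as a left Bousfield localisation of the operadic model structure and then to pin down the localising set. First I would check that the hypotheses for the left Bousfield localisation of the operadic model structure by the set $S = \{\partial T \hookto T \mid T \in \Omega\}$ of boundary inclusions—equivalently, by the set of \emph{all} monomorphisms between representables—are met: the operadic model structure is combinatorial (it is one of the hypotheses recalled in the excerpt), and left proper, since its cofibrations are monomorphisms in a presheaf topos. Hence the left Bousfield localisation $L_S$ exists; it is again combinatorial and left proper, its cofibrations are the normal monomorphisms, and its weak equivalences are the $S$-local weak equivalences. It remains to show these $S$-local weak equivalences coincide with the test weak equivalences.

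The key point is a characterisation of the test weak equivalences by a Segal-type generation, exactly parallel to Theorem~\ref{thm:Segal}. By Theorem~\ref{thm:norm_mod_struct} and Proposition~\ref{prop:norm_fib}, the class $\W_{\mathrm{test}}$ of test weak equivalences satisfies: it has the 2-out-of-3 property; it contains all maps with the right lifting property with respect to normal monomorphisms; and the normal monomorphisms lying in $\W_{\mathrm{test}}$ are closed under pushout, transfinite composition and retracts (these being the trivial cofibrations of the normal test model structure). The same three properties hold, for the same formal reasons, for the $S$-local weak equivalences of $L_S$. Now a general fact about combinatorial left proper model categories with cofibrations the monomorphisms: the $S$-local weak equivalences form the \emph{smallest} class satisfying these three closure properties and containing $S$. (One direction is that the localisation $L_S$ witnesses a class with those properties containing $S$; the other, that any such class $\W$ contains the $S$-local equivalences, follows because $S \subseteq \W$ forces every $S$-local fibrant replacement map, built by the small object argument from $S$-horns, to lie in $\W$, and then 2-out-of-3 together with the identification of local equivalences via local fibrant objects does the rest—this is the abstract counterpart of the argument proving Theorem~\ref{thm:Segal}.) Therefore it suffices to show (i) every map in $S$ is a test weak equivalence, and (ii) every Segal core inclusion $\SC{T} \hookto T$ is a test weak equivalence—so that $\W_{\mathrm{test}}$ contains a generating set for the localisation and, conversely, the $S$-local equivalences satisfy the three properties that define $\W_{\mathrm{test}}$ minimally once one observes the test model structure itself is obtained the same way.

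For (i): a boundary inclusion $\partial T \hookto T$ is a map of representables—well, $T$ is representable and $\partial T$ is a finite colimit of representables—between aspherical dendroidal sets. Indeed $T$ is aspherical (it has a terminal object in its category of elements, namely the top cell), and $\partial T$ is aspherical by Lemma~\ref{lemma:MV} applied to its decomposition into the faces of $T$, all intersections of faces being again (iterated faces, hence) representable and aspherical; a map between aspherical objects is automatically a test weak equivalence since the test weak equivalences are exactly the maps inverted by $\lambda_!$ up to Kan--Quillen equivalence, and both source and target map to contractible simplicial sets. Claim (ii) is precisely the content of the preceding proposition in the excerpt (``Every operadic weak equivalence is a test weak equivalence''), whose proof shows the Segal core of any tree is aspherical; alternatively it is immediate once (i) is known, since Segal core inclusions are operadic trivial cofibrations between normal objects and the operadic weak equivalences are generated from Segal cores exactly as in Theorem~\ref{thm:Segal}. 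Combining: $\W_{\mathrm{test}}$ contains $S$ and is stable under the three operations, so it contains all $S$-local weak equivalences; conversely the $S$-local weak equivalences contain $S$ hence (being closed under the three operations, and containing all maps with RLP against normal monomorphisms, which are trivial fibrations of $L_S$) contain $\W_{\mathrm{test}}$ by minimality of the latter as established via Theorem~\ref{thm:norm_mod_struct}. Hence the two model structures have the same cofibrations and the same weak equivalences, so they coincide.

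The main obstacle I anticipate is the abstract ``minimality'' step: establishing cleanly that the $S$-local weak equivalences of a left Bousfield localisation form the smallest class containing $S$ and closed under 2-out-of-3, under the three operations on trivial cofibrations, and containing the maps with RLP against all cofibrations. This is folklore but needs a careful small-object-argument bookkeeping (replacing an arbitrary map by one between $S$-local fibrant objects using only $S$-cell attachments), and it is the place where one must be precise about which ambient cofibrations are used—here the normal monomorphisms. Everything else (asphericity of $\partial T$ and of Segal cores, left properness, combinatoriality) is either already in the excerpt or a routine presheaf-topos fact.
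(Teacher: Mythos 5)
There is a genuine gap, and it starts with a misreading of the localising set. The theorem localises the operadic model structure by the set of \emph{all} maps $S \to T$ between representable dendroidal sets, i.e.\ the images under the Yoneda embedding of the morphisms of $\Omega$; you replace this by the set of boundary inclusions $\partial T \hookto T$, calling the two ``equivalent''. They are not: $\partial T$ is not representable, and more importantly the boundary inclusions are not test weak equivalences. For instance $\partial C_2 \cong \eta \amalg \eta \amalg \eta$ is disconnected, hence not aspherical, while $C_2$ is aspherical; and $\partial \eta = \varnothing$. Your appeal to Lemma~\ref{lemma:MV} to prove asphericity of $\partial T$ fails because the intersection of two distinct faces can be empty or disconnected (two distinct edges $\eta \hookto C_2$ meet in $\varnothing$), so the hypothesis of the lemma is violated. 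Worse, if one did localise the operadic structure at all boundary inclusions, the local trivial cofibrations, being a saturated class containing $\{\partial T \hookto T\}$, would contain every normal monomorphism, so every map would become a weak equivalence and the localisation would be the trivial model structure --- certainly not the normal test model structure, which is Quillen equivalent to spaces. (Relatedly, the remark after the main theorem of Section~5 records that $\Omega$ is \emph{not} a strict test category, which is exactly the failure of the kind of asphericity you are asserting.)

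Beyond this, the step you flag as the ``main obstacle'' --- that the $S$-local equivalences form the \emph{smallest} class containing $S$ and satisfying the three closure properties --- is indeed where the real work lies, and the paper does not treat it as folklore. Its proof proceeds quite differently: it chooses a normalisation $E$ of the terminal dendroidal set, passes to the slice $\pref{\Omega}/E$ (where the operadic cofibrations become \emph{all} monomorphisms, so that Cisinski's theory applies), shows the projection induces Quillen equivalences compatible with localisation, proves the easy inclusion $\W \subset \W_\infty$ from the universal property of Bousfield localisation together with the fact that maps between representables and Segal core inclusions are test weak equivalences, and obtains the hard inclusion $\W_\infty \subset \W$ from the identification of the test weak equivalences over $E$ as the smallest $\Omega/E$-localiser containing the maps between representables ($\Omega/E$ being a regular skeletal test category). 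Your outline contains no substitute for this minimality argument, and with the localising set corrected you would still need it; as written, the proposal does not establish the theorem.
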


\begin{proof}
  Let $E$ be a normalisation of the terminal dendroidal set, that is, a
  normal dendroidal set such that the map $p$ to the terminal dendroidal set
  has the right lifting property with respect to normal monomorphisms.
  Consider the adjunction
  \[ p_! : \pref{\Omega}/E \rightleftarrows \pref{\Omega} : p^\ast, \]
  where $p_!$ is the forgetful functor and $p^\ast$ the functor sending $X$
  to $X \times E$. For any dendroidal set $X$, the projection $X \times E \to X$ has
  the right lifting property with respect to normal monomorphisms and is
  hence an operadic weak equivalence. This shows that the unit and the
  counit of the adjunction $(p_!, p^\ast)$ are objectwise operadic weak
  equivalences and we have a Quillen equivalence
  \[ p_! : \pref{\Omega}_\operadic/E \rightleftarrows
  \pref{\Omega}_\operadic : p^\ast, \]
  where $\pref{\Omega}_\operadic$ denotes the operadic model category
  (see also \cite[proof of Proposition~3.12]{CisMoerdDend}). Note that the
  fact that $E$ is normal implies that the cofibrations
  of~$\pref{\Omega}_\operadic/E$ are the monomorphisms. Consider the left
  Bousfield localisation of this Quillen equivalence by the set~$S$ of maps
  between representables of $\pref{\Omega}/E \simeq
  \text{\smash{$\pref{\Omega/E}$}}$. We get a Quillen equivalence
   \[
     p_! : L_S(\pref{\Omega}_\operadic/E) \rightleftarrows
     L_{S'}(\pref{\Omega}_\operadic) : p^\ast,
   \]
  where $S'$ denotes the set of maps between representable dendroidal sets,
  the unit and counit still being objectwise weak equivalences.
  As every object of $L_S(\pref{\Omega}_\operadic/E)$ is cofibrant,
  a map of dendroidal sets $f$ is a weak equivalence of
  $L_{S'}(\pref{\Omega}_\operadic)$ if and only if $p^*(f)$ is in the class
  $\W$ of weak equivalences of $L_S(\pref{\Omega}_\operadic/E)$. Similarly,
  as $X \times E \to X$ is a test weak equivalence for any dendroidal set
  $X$, such a map~$f$ is a test weak
  equivalence if and only if $p^\ast(f)$ is in the class $\W_\infty$ of test
  weak equivalences of~$\pref{\Omega}/E \simeq
  \text{\smash{$\pref{\Omega/E}$}}$, which is nothing but the class of test
  weak equivalences of $\pref{\Omega}$ above~$E$. To conclude the proof, it
  thus suffices to show the equality $\W = \W_\infty$.

  By the previous proposition, the identity functor of $\pref{\Omega}/E$ is
  a left Quillen functor from the operadic model category to the normal test
  model category (or more precisely between their slices). As $S$ belongs to
  $\W_\infty$, the universal property of localisations implies that $\W
  \subset \W_\infty$.

  To prove the converse, we will use the machinery of \cite{Cisinski}.
  Define an \ndef{$\Omega/E$-localiser} to be the class of weak equivalences
  of some combinatorial model category structure on $\pref{\Omega}/E$ whose
  cofibrations are the monomorphisms. (By \cite[Theorem 1.4.3]{Cisinski},
  this is equivalent to what is called an \emph{accessible} localiser in
  \cite[Section 1.4]{Cisinski}.) Since $E$ is normal,
  the category~$\Omega/E$ is a regular skeletal category in the sense
  of \cite[Definition~8.2.5]{Cisinski}. Moreover, as $E$ is aspherical,
  $\Omega/E$ is a test category (see \cite[Remark 1.5.4]{Maltsi}). In
  particular, by \cite[Proposition 6.4.26 and Proposition 8.2.9]{Cisinski},
  $\W_\infty$~is the smallest $\Omega/E$\nbd-localiser containing $S$. As
  $\W$ is a localiser containing $S$, we have $\W_\infty \subset \W$,
  thereby ending the proof.
\end{proof}

\begin{rem}
  The previous theorem can also be proved as follows. One identifies the
  operadic model category structure $\pref{\Omega}_\operadic$ on dendroidal sets
  with a localisation of the category of dendroidal spaces (that is,
  simplicial presheaves on $\Omega$) equipped with the Reedy model category
  structure. This localisation involves the inner horn inclusions
  $\Lambda^e_T \hookto T$ and the map $\{0\} \hookto J$, where $J$ denotes
  the nerve of the simply connected groupoid on two objects $0$ and $1$, see
  \cite[Sections 5 and~6]{CisMoerdDendSeg}. Thus the localisation of
  $\pref{\Omega}_\operadic$ by the maps between representables is equivalent
  to a further localisation of dendroidal spaces by the images $S \to T$ of
  these maps. One checks that localising the Reedy model category
  structure on dendroidal spaces by these maps $S \to T$ already makes the
  inner horn inclusions (or equivalently, the Segal core inclusions) weak
  equivalences, as well as the image of any Kan--Quillen weak equivalence of
  simplicial sets under the embedding of simplicial spaces into dendroidal
  spaces, so in particular $\{0\} \to J$. But the localisation thus obtained
  describes the homotopy theory of homotopically constant contravariant
  diagrams of spaces on~$\Omega$, hence is equivalent to that of spaces
  since $\Omega$ is aspherical.
\end{rem}

\bibliographystyle{plain}
\bibliography{biblio}

\end{document}